 \newtheorem{thm}{Theorem}[section]
 \newtheorem{lem}[thm]{Lemma}
 \newtheorem{prop}[thm]{Proposition}
 \theoremstyle{definition}
 \newtheorem{defn}[thm]{Definition}
 \theoremstyle{remark}
 \newtheorem{rem}[thm]{Remark}
 \theoremstyle{definition}
 \newcommand{\PP}{\mathbb{P}}
\begin{document}

\title{Reduced and non-reduced linear spaces: Lines and  points}

\author[E. Carlini]{Enrico Carlini}
\address[E. Carlini]{Dipartimento di Matematica, Politecnico di Torino, Torino, Italia}
\email{enrico.carlini@polito.it}

\author[M.V.Catalisano]{Maria Virginia Catalisano}
\address[M.V.Catalisano]{DIPTEM - Dipartimento di Ingegneria della Produzione, Termoenergetica e Modelli
Matematici, Universit\`{a} di Genova, Piazzale Kennedy, pad. D
16129 Genoa, Italy.} \email{catalisano@diptem.unige.it}

\author[A.V. Geramita]{Anthony V. Geramita}
\address[A.V. Geramita]{Department of Mathematics and Statistics, Queen's University, Kingston, Ontario, Canada, K7L 3N6 and Dipartimento di Matematica, Universit\`{a} di Genova, Genova, Italia}
\email{Anthony.Geramita@gmail.com \\ geramita@dima.unige.it  }


\begin{abstract}
In this paper we consider the problem of determining the Hilbert
function of schemes ${X}\subset\mathbb{P}^n$ which are 
{the generic union of $s$ lines and one $m$-multiple point}. 
We
completely solve this problem for any $s$ and $m$ when $n\geq 4$.
When $n=3$ we find several defective such schemes and conjecture
that they are the only ones. We verify this conjecture in several
cases.
\end{abstract}

\maketitle

\section{Introduction}

If $P$ is a point in $\mathbb{P}^n$ with corresponding ideal
 {
$I_P\subset R= k[x_0,\ldots,x_n]$}
 ($k$ algebraically closed of
characteristic zero), the scheme supported on $P$ and defined by
the ideal 
{
$(I_P)^m$ } is called an $m$-{\it multiple point} with
support $P$. In a remarkable paper \cite{AH95} J. Alexander and A.
Hirschowitz found the Hilbert function of a finite union of
$2$-multiple points supported on a generic set of points in
$\mathbb{P}^n$ (see also \cite{chandler} and \cite{BrOt} for  simpler proofs). This
result permitted Alexander and Hirschowitz to solve the long open
problem regarding the dimensions of the (higher) secant varieties
of the Veronese varieties (see \cite{Ge, IaKa} for an expository
discussion of this important result). In a subsequent paper
\cite{CGG4} the authors showed that, in an analogous way (using
the Lemma of Terracini) one can find the dimensions of the
(higher) secant varieties to Segre embeddings of products of
projective spaces, if one could calculate the Hilbert functions of
certain unions of reduced and non-reduced schemes supported on
unions of generic linear spaces of different dimensions (for more
details see Theorem 1.1. in \cite{CGG4}). The study of such
schemes is one of the principal motivations for our work in this
paper.

There is also other closely related research in the literature,
e.g. some authors have considered the problem of finding the
Hilbert function of generic $m$-multiples points in $\mathbb{P}^2$
(see the survey \cite{Miranda99} and  \cite {CCMO}, \cite
{HarbourneRoe04}, \cite{Yang07}) as well as of  generic $m$-multiple points in
$\mathbb{P}^n$ with $n>2$ (see \cite{LafaceUgaglia06}). Moreover,
Hartshorne and Hirschowitz considered the same problem for a generic
union of (reduced) lines in  $\mathbb{P}^n$ ($n>2$).

In this paper we consider yet another variant of this family of
problems: namely the case in which the scheme
${X}\subset\mathbb{P}^n(n\geq 3)$ is composed of $s$
generic (reduced) lines and one  {generic} $m$-multiple point. A simple
parameter count leads one to expect that the Hilbert function of
such an ${X}$, $HF({X},\cdot)$, is
\[HF({X},d)=\min\left\{\binom{d+n}{n},\binom{m+n-1}{n}+s(d+1)\right\}.\ \ \ \ (*)\]
If we let $hp({X},\cdot)$ denote the Hilbert polynomial of ${X}$, then $(*)$ is really saying that
\[HF({X},d)=\min\left\{hp(\mathbb{P}^n,d),hp({X},d)\right\},\]
equivalently
\[\dim (I_{{X}})_d=\max\left\{\binom{d+n}{n}-\binom{m+n-1}{n}-s(d+a),0\right\}.\]
Note that in this case we say that the Hilbert function of
${X}$ is {\it bipolynomial }(see also \cite{CarCatGer2} for other
examples of this).

We prove $(*)$ (see Theorem \ref{mainThn>3}) for any $s$ and $m$
when $n\geq 4$. When $n=3$, the situation is less clear. In
particular, the ``simple parameter count" no longer always gives
the actual Hilbert function (the precise statement is given in
Theorem \ref{mainThn=3bis}). We conjecture that the parameter
count fails (for $n=3$) if and only if $m=d$ and $1<s\leq d$. In
these cases we show that $\dim (I_{ X})_d=\binom{d-s+2}{2}$.

\section{Basic facts and notation}\label{basicsection}

%
%
%
%
%

Since we will make use of Castelnuovo's inequality several times, we recall it here in a form more suited to our use (for
notation and proof we refer to \cite{AH95}, Section 2).

\begin {defn}  \label{ResiduoTraccia}
If $X, Y$ are closed subschemes of $\mathbb P^n$, we denote by $Res_Y X$
the scheme defined by the ideal $(I_X:I_Y)$ and we call it the
{\it residual scheme} of $X$ with respect to $Y$, we denote by $Tr_Y X \subset Y$
the schematic intersection $X\cap Y$, and call it
the {\it trace} of $X$ on $Y$. 
{We also denote by $X+Y$ the schematic union of $X$ and $Y$}.
\end {defn}

 \begin{lem} \label{Castelnuovo}{\bf (Castelnuovo's inequality):}
Let $d,\delta \in \mathbb N$, $d \geq \delta$, let ${Y} \subseteq \PP ^n$ be a smooth hypersurface of degree $\delta$,
and let $X \subseteq \PP
^n$ be a  closed subscheme. Then
$$
\dim (I_{X, \PP^n})_d  \leq  \dim (I_{ Res_Y X, \PP^n})_{d-\delta}+
\dim (I_{Tr _{Y} X, Y})_d.
$$
\qed
\end{lem}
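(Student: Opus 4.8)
The plan is to realize the two right-hand terms as, respectively, the kernel and a space containing the image of a single linear map---restriction to $Y$ on degree-$d$ graded pieces---and then to read off the inequality from a rank--nullity count.

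First I would fix a form $F \in R = k[x_0,\dots,x_n]$ of degree $\delta$ cutting out $Y$. Since $Y$ is smooth it is reduced, so its saturated ideal is the principal ideal $I_Y = (F)$. Reduction modulo $F$ then defines a $k$-linear map
\[
\pi \colon (I_X)_d \To \big(R/(F)\big)_d,
\]
whose image lies inside the degree-$d$ component of the homogeneous ideal $J := (I_X + I_Y)/I_Y$ cutting out the schematic intersection $X \cap Y = Tr_Y X$ in the coordinate ring of $Y$.

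Next I would identify $\ker \pi$. An element of $\ker\pi$ in degree $d$ is a form $G \in (I_X)_d$ lying in $(F)$, say $G = FH$ with $\deg H = d-\delta$; the condition $FH \in I_X$ is precisely the statement $H \in (I_X : F) = (I_X : I_Y)$, that is $H \in (I_{Res_Y X})_{d-\delta}$, and conversely every such $H$ yields an element $FH \in \ker\pi$. Thus $\ker\pi = F\cdot (I_{Res_Y X})_{d-\delta}$, and since $R$ is a domain, multiplication by $F$ is injective, so $\dim_k \ker\pi = \dim (I_{Res_Y X})_{d-\delta}$. (Conceptually this is the degree-$d$ piece of the residual exact sequence of ideal sheaves
\[
0 \to I_{Res_Y X}(-\delta) \xrightarrow{\,\cdot F\,} I_X \to I_{Tr_Y X, Y} \to 0,
\]
twisted by $\mathcal{O}(d)$.)

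Finally, rank--nullity gives $\dim (I_X)_d = \dim\ker\pi + \dim\operatorname{im}\pi$. Because the possibly unsaturated ideal $J$ is contained in the saturated ideal $I_{Tr_Y X, Y}$, its image satisfies $\operatorname{im}\pi \subseteq (I_{Tr_Y X, Y})_d$, whence $\dim\operatorname{im}\pi \le \dim (I_{Tr_Y X, Y})_d$, and combining the two facts yields the claim. The one point that requires care---and the reason the conclusion is an inequality rather than an equality---is exactly this last containment: the restriction map need not surject onto $(I_{Tr_Y X, Y})_d$, the gap being controlled by $H^1(I_{Res_Y X}(d-\delta))$ in the long exact cohomology sequence attached to the residual sequence above. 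Everything else is formal once $I_Y=(F)$ and the colon-ideal description of $Res_Y X$ are in hand; the hypothesis $d \ge \delta$ enters only so that the source $(I_{Res_Y X})_{d-\delta}$ makes sense.
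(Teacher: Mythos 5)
Your proof is correct. The paper itself does not prove this lemma --- it defers to \cite{AH95}, Section 2 --- and your argument (rank--nullity for the restriction map $(I_X)_d \to (R/(F))_d$, equivalently taking global sections of the residual exact sequence $0 \to \mathcal{I}_{Res_Y X}(d-\delta) \to \mathcal{I}_X(d) \to \mathcal{I}_{Tr_Y X, Y}(d) \to 0$) is exactly the standard argument given in that reference, so there is nothing to correct.
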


The following lemma gives a criterion for adding to a scheme $X
\subseteq \Bbb P^ n$ a set of reduced points lying 
 {on a projective variety  $Y$} 
and imposing independent
conditions to forms of a given degree in the ideal of $X$ (see also  \cite[Lemma 2.2]{CarCatGer2}).

 \begin{lem} \label{AggiungerePuntiSuY} Let $d \in \Bbb N$ and let $X  \subseteq \Bbb P^n$ be  a closed subscheme. 
{Let ${Y} \subseteq \Bbb P ^n$ be a closed reduced irreducible subscheme, and
let $P_1,\dots,P_s$ be generic  points on $Y$.}
If $\dim (I_{X })_d =s$ and $\dim (I_{X +Y})_d =0$, then
$
\dim (I_{X+P_1+\cdots+P_s})_d = 0.$
\par
\qed
\end{lem}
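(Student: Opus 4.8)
The plan is to argue by induction on $s$, showing that each generic point we place on $Y$ cuts the dimension of the degree-$d$ piece of the ideal down by exactly one, so that starting from $\dim(I_X)_d=s$ we arrive at $0$ after $s$ points. The base case $s=0$ is vacuous. The engine of the induction is the following local observation, which I would isolate first: if $W\subseteq\PP^n$ is any closed subscheme with $\dim(I_W)_d=t\geq 1$ and $\dim(I_{W+Y})_d<t$, then a generic point $P$ of $Y$ satisfies $\dim(I_{W+P})_d=t-1$.

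To prove this observation I would recall that a point $P$ fails to impose an independent condition on the linear system $(I_W)_d$ precisely when \emph{every} form of that system vanishes at $P$, i.e.\ when $P$ lies in the base locus $B=\mathrm{Bs}\bigl((I_W)_d\bigr)$; for $P\notin B$ one gets $\dim(I_{W+P})_d=\dim(I_W)_d-1$. Now the hypothesis $\dim(I_{W+Y})_d<\dim(I_W)_d$ says exactly that some form of $(I_W)_d$ does not vanish identically on $Y$, so $Y\not\subseteq B$. Here is where the irreducibility of $Y$ enters: since $Y$ is reduced and irreducible and is not contained in the closed set $B$, the intersection $Y\cap B$ is a \emph{proper} closed subset of $Y$, and a generic point $P\in Y$ avoids it. For such $P$ the imposed condition is independent, giving the claimed drop.

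I would then run the induction with $W_0=X$ and $W_i=X+P_1+\cdots+P_i$. For each $i<s$ the inclusion $(I_{W_i+Y})_d\subseteq(I_{X+Y})_d=0$ forces $\dim(I_{W_i+Y})_d=0$, while $\dim(I_{W_i})_d=s-i\geq 1$ by the inductive hypothesis. Thus the observation applies with $W=W_i$ and $t=s-i$, and a generic $P_{i+1}\in Y$ yields $\dim(I_{W_{i+1}})_d=s-i-1$. After $s$ steps this gives $\dim(I_{X+P_1+\cdots+P_s})_d=0$, as desired.

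The one point that requires care---and the main, if mild, obstacle---is the meaning of ``generic points'' when the $P_i$ are produced sequentially, each avoiding a base locus that depends on the earlier choices. I would make this precise through upper semicontinuity of the function $(P_1,\dots,P_s)\mapsto\dim(I_{X+P_1+\cdots+P_s})_d$ on the irreducible variety $Y^s$. The sequential construction above exhibits at least one point of $Y^s$ at which this function equals $0$, which is its minimum possible value; by semicontinuity the value $0$ is then attained on a dense open subset of $Y^s$, that is, for a generic choice of $P_1,\dots,P_s$ on $Y$. This is exactly the assertion of the lemma.
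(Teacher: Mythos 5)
Your proof is correct and follows essentially the same route as the paper's: induction on $s$, with the key step that $\dim(I_{X+Y})_d=0$ provides a form in $(I_X)_d$ not vanishing identically on $Y$, so that a generic point of $Y$ imposes an independent condition and drops the dimension by one. The base-locus discussion and the semicontinuity argument on $Y^s$ are just careful elaborations of what the paper compresses into the phrase ``thus the same holds for a generic point $P_1 \in Y$.''
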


\begin{proof}

By induction on $s$. \\
Since  $(I_{X +Y} )_d= (I_X )_d\cap (I_Y)_d=(0) $ and $\dim (I_{X })_d =s>0$,  let  $f \in (I_{X })_d$,  $f \notin (I_{Y})_d$. Therefore there exists $P \in Y$, $P \notin X$ such that $f(P) \neq 0$. It follows that
$\dim (I_{X +P})_d = s-1$ and thus the same holds for  a generic point
{ $P_1 \in Y$.  
}
So we are done in case $s=1$.

Let $s>1$ and let $X' = X+P_1$.
Obviously $\dim (I_{X ' + Y})_d =0$. Hence, by the inductive hypothesis, there exist $s-1$  
{distinct generic  points }
$P_2, \dots,P_{s}$ in $Y$ such that $\dim (I_{X'+P_2+\cdots+P_{s}})_d =\dim (I_{X+P_1+\cdots+P_s})_d
= 0.$

\end{proof}

\begin {defn}\label{conica degenere}
We say that $C$ is a {\it degenerate conic} if  $C$ is the union
of two intersecting lines $L, M.$  In this case we write
$C=L+M$.
\end {defn}

\begin {defn}\label{3dimsundial}
Let $L$ and $M$ be two intersecting lines in $\mathbb P^n$ ($n \geq 3$), let $P = L \cap M$, and let
$T\simeq \Bbb P^{3}$ be a generic linear space containing the
scheme $L+M$.  We call the scheme $L+M+ 2P|_T$ a {\it degenerate conic with an embedded point} or a {\it
3-dimensional  sundial}
(see \cite {HartshorneHirschowitz}, or \cite[definition 2.6 with $m=1$]{CarCatGer2} ).

\end {defn}

\medskip

The following lemma shows that a  3-dimensional  sundial in $\mathbb P^n$ is a degeneration of two generic lines   in $\mathbb P^n$ (see \cite {HartshorneHirschowitz} for the case $n=3$, and
see  \cite[Lemma 2.5]{CarCatGer2} for the proof in a more general  case).

 \begin{lem}  \label{sundial}
 Let $X_1  \subset \Bbb P^n $ ($n \geq 3$) be the disconnected  subscheme consisting of two skew  lines $L_1$ and $M$ (so the linear span of $X_1$ is $<X_1> \simeq \Bbb P^{3} $). Then there exists a flat family  of subschemes $$X_{\lambda}\subset <X_1> \ \ \ \ \ (\lambda \in k )$$
whose generic fiber is the union of two skew lines and
whose special fibre $X_0$ is the union of

\begin{itemize}
\item
the line $M $,

\item a line $L$ which intersects  $M$ in a point $P$,

\item   the scheme $2P|_ { <X_1>}$, that is, the schematic intersection of the double point
$2P$ of $\mathbb P^n$ and $<X_1>$.

\end{itemize}
Moreover, if $H \simeq  \Bbb P^{2}$ is the linear span of $L$
and  $M$, then $Res_H(X_0)$ is given by the (simple) point $P$.
 \end{lem}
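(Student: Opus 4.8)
The plan is to realize the sundial of Definition~\ref{3dimsundial} as the flat limit of an explicit pencil of pairs of skew lines inside the fixed $\mathbb{P}^3=\langle X_1\rangle$. Since $M$, the limiting line $L$, and the double point $2P$ all lie in $\langle X_1\rangle$, the whole argument takes place in this $\mathbb{P}^3$, and $2P|_{\langle X_1\rangle}$ is just the double point cut out by $\mathfrak{m}_P^2$ there. First I would fix homogeneous coordinates $x_0,x_1,x_2,x_3$ on $\langle X_1\rangle$ so that $M=\{x_0=x_1=0\}$ and $X_1=L_1+M$ with $L_1=\{x_2=0,\ x_1=x_3\}$.

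Next I would introduce the pencil $L_\lambda=\{x_2=0,\ x_1=\lambda x_3\}$ and set $X_\lambda=L_\lambda+M$. A direct check shows that $L_\lambda$ is skew to $M$ for $\lambda\neq 0$, while for $\lambda=0$ the line $L:=L_0=\{x_1=x_2=0\}$ meets $M$ at $P=[0:0:0:1]$, and $L$ and $M$ span the plane $H=\{x_1=0\}$. For $\lambda\neq 0$ the union of skew lines $X_\lambda$ has constant Hilbert polynomial $2t+2$, so by properness of the Hilbert scheme the family over $\lambda\neq 0$ extends to a flat family over $\lambda=0$; the task is to identify the flat limit $X_0$.

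To compute $X_0$ I would proceed degree by degree. In degree two $(I_{X_\lambda})_2$ is four dimensional, spanned by $x_0x_2,\ x_1x_2,\ x_1(x_1-\lambda x_3),\ x_0(x_1-\lambda x_3)$, whose limits as $\lambda\to 0$ are $x_0x_2,\ x_1x_2,\ x_1^2,\ x_0x_1$; by flatness these span $(I_{X_0})_2$. On the other hand the candidate sundial $Z:=L+M+2P|_{\langle X_1\rangle}$ has saturated ideal $I_Z=I_L\cap I_M\cap\mathfrak{m}_P^2=(x_1,x_0x_2)\cap(x_0,x_1,x_2)^2$, whose degree-two piece is exactly $\langle x_0x_1,x_1^2,x_1x_2,x_0x_2\rangle$. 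Thus $(I_{X_0})_2=(I_Z)_2$, and a short check shows that these four quadrics already generate $I_Z$ and that $Z$ has Hilbert polynomial $2t+2$. Since $X_0$ is a specialization of two skew lines it contains the cycle $L+M$, of Hilbert polynomial $2t+1$, plus one extra unit of length at the limit point $P$; matching Hilbert polynomials with $Z$ then forces $X_0=Z$.

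The step I expect to be the main obstacle is precisely this last identification: one must show that the extra length appearing in the flat limit is the single transverse embedded point recorded by $\mathfrak{m}_P^2$, and not some other degenerate structure at $P$, which is what the explicit computation of the limiting quadrics (together with the flatness and Hilbert-polynomial bookkeeping) is designed to control. The final assertion then follows by a colon computation: with $I_H=(x_1)$, the scheme $Res_H X_0$ is defined by $(I_Z:x_1)$; since $gx_1\in(x_1,x_0x_2)$ automatically, the membership $gx_1\in I_Z$ reduces to $gx_1\in\mathfrak{m}_P^2$, which holds if and only if $g\in\mathfrak{m}_P$. Hence $(I_Z:x_1)=(x_0,x_1,x_2)=I_P$, so $Res_H(X_0)$ is the reduced point $P$, as claimed.
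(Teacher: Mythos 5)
Your proof is correct, but note that the paper does not actually prove Lemma \ref{sundial}: it states the result and refers to \cite{HartshorneHirschowitz} for the case $n=3$ and to \cite[Lemma 2.5]{CarCatGer2} for the general case. So your argument is not an alternative to a proof in this paper but a self-contained replacement for a citation, and it follows the same strategy as the cited sources: fix coordinates in $\langle X_1\rangle\simeq \mathbb{P}^3$, write an explicit pencil $X_\lambda=L_\lambda+M$ of pairs of skew lines degenerating to two incident lines, compute the limit of the ideals $I_{X_\lambda}=(x_2,x_1-\lambda x_3)\cap(x_0,x_1)$, identify the limiting quadrics with the degree-two part of $I_Z$, where $Z=L+M+2P|_{\langle X_1\rangle}$, and conclude by a Hilbert-polynomial comparison; the residual statement is then the colon computation $(I_Z:x_1)=(x_0,x_1,x_2)$. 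What your version buys is that the degeneration is completely explicit and the reader need not consult the references; what the citation buys is brevity and generality, since \cite[Lemma 2.5]{CarCatGer2} proves the analogous statement for higher-dimensional sundials with multiple structure, not just the $3$-dimensional case needed here.

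Two steps deserve slightly more care, though neither is a real gap. First, ``by flatness these span $(I_{X_0})_2$'' asserts an equality where flatness directly gives only the containment $(I_{X_0})_2\supseteq\langle x_0x_1,\,x_1^2,\,x_1x_2,\,x_0x_2\rangle$: the saturated ideal of a flat limit contains the degreewise limit of the ideals, but could a priori be larger in a fixed degree. Fortunately the containment is all you use. Second, the sentence about ``the cycle $L+M$ plus one extra unit of length at $P$'' is intuition rather than argument and should be replaced by the clean deduction it gestures at: the four limit quadrics generate $I_Z$ (as you checked), so $I_Z\subseteq I_{X_0}$, i.e. $X_0\subseteq Z$; both schemes have Hilbert polynomial $2t+2$ and both ideals are saturated (indeed $I_Z=(x_1,x_0x_2)\cap(x_0,x_1,x_2)^2$ is an intersection of saturated ideals), so the two ideals agree in all sufficiently large degrees and hence coincide, giving $X_0=Z$. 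With these rephrasings the proof is complete, including the final residual computation.
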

\qed

\begin{rem} \label{degenerare2rette}
Since  it is easy to see that  in $\mathbb P^n$ ($n \geq 3$) a  3-dimensional  sundial is also  a degeneration of two intersecting lines  and a simple generic point
{which moves toward the intersection point of the two lines,
} by the lemma above we get that  in $\mathbb P^n$ ($n \geq 3$)
a  degenerate conic with an embedded point can be viewed  either as a degeneration of two generic lines, or as a degeneration of a scheme which is the union of a degenerate conic and a simple generic point.

 \end{rem}
Inasmuch as we have upper semicontinuity of the Hilbert function in a flat family, we will use the remark above several times in what follows.


{Now an easy, but useful 
  Lemma.}

\begin{lem} \label{BastaProvarePers=e,e*}
{ Let $X \subset \mathbb P^n$.}

\begin{itemize} 
\item[(i)]
{If $X= X_1+\dots + X_s $ is the  union of non-intersecting  closed subschemes $X_i$,  
if $X'= X_1+\dots + X_{s'} \subset X,$ where $s' <s$ ,
and if
$HF(X,d)= \sum_{i=1}^s HF(X_i,d) $, then  
$$HF(X',d)= \sum_{i=1}^{s'} HF(X_i,d) .$$
}
\item[(ii)]
{If $X= Y + mP$ is the union of a closed subscheme $Y$ and one $m$-multiple point, if $X'= Y+m'P \subset X,$  where $m' <m$, and if 
$HF(X,d)= HF(Y,d) + \binom{m+n-1}{n}$, then  
$$HF(X',d)= HF(Y,d) + \binom{m'+n-1}{n}.$$
}
\item[(iii)]  
If  $ \dim (I_{X})_d = 0$, then  $ \dim (I_{X''})_d =0$, for any subscheme $X'' \supset X$.
 \par
\end{itemize}
\end{lem}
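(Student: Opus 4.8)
The plan is to deduce all three parts from one elementary fact about schematic unions. For closed subschemes $Z_1,Z_2\subseteq\mathbb P^n$ one has $I_{Z_1+Z_2}=I_{Z_1}\cap I_{Z_2}$, so the kernel of the restriction map $R_d\to (R/I_{Z_1})_d\oplus(R/I_{Z_2})_d$ is exactly $(I_{Z_1+Z_2})_d$. This gives the basic \emph{subadditivity}
\[HF(Z_1+Z_2,d)\leq HF(Z_1,d)+HF(Z_2,d),\]
with equality in degree $d$ if and only if that restriction map is surjective. First I record part (iii), which is immediate: if $X''\supseteq X$ then $I_{X''}\subseteq I_X$, hence $(I_{X''})_d\subseteq (I_X)_d=(0)$.

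For part (i) I would use a sandwich argument. Set $X''=X_{s'+1}+\cdots+X_s$, so that $X=X'+X''$ is again a union of non-intersecting subschemes. Iterating subadditivity gives $HF(X',d)\leq\sum_{i=1}^{s'}HF(X_i,d)$ and $HF(X'',d)\leq\sum_{i=s'+1}^{s}HF(X_i,d)$, while the two-piece bound gives $HF(X,d)\leq HF(X',d)+HF(X'',d)$. Chaining these with the hypothesis $HF(X,d)=\sum_{i=1}^{s}HF(X_i,d)$ squeezes every inequality into an equality; reading off the relevant one yields $HF(X',d)=\sum_{i=1}^{s'}HF(X_i,d)$.

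For part (ii) note first that $m'P$ is a subscheme of $mP$, since $(I_P)^m\subseteq(I_P)^{m'}$ when $m'<m$; consequently $X'=Y+m'P\subseteq X$. Combining subadditivity with the elementary bound $HF(mP,d)\leq\binom{m+n-1}{n}$ gives $HF(X,d)\leq HF(Y,d)+HF(mP,d)\leq HF(Y,d)+\binom{m+n-1}{n}$, and the hypothesis of equality forces both $HF(mP,d)=\binom{m+n-1}{n}$ and $HF(X,d)=HF(Y,d)+HF(mP,d)$. The first equality is the statement that the fat point $mP$ imposes independent conditions in degree $d$, which happens exactly when $d\geq m-1$; since $m'\leq m-1\leq d$ this in turn forces $HF(m'P,d)=\binom{m'+n-1}{n}$. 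The second equality says the restriction map $R_d\to (R/I_Y)_d\oplus(R/I_{mP})_d$ is onto. Composing it with the projection induced by the natural surjection $(R/I_{mP})_d\to(R/I_{m'P})_d$ produces a surjection $R_d\to (R/I_Y)_d\oplus(R/I_{m'P})_d$, and by the equality criterion above this gives $HF(X',d)=HF(Y,d)+HF(m'P,d)=HF(Y,d)+\binom{m'+n-1}{n}$.

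The genuinely substantive step is part (ii): the subtlety is that the hypothesis does not merely assert independence of $Y$ and $mP$, but silently encodes the numerical constraint $d\geq m-1$, and it is precisely this constraint that makes $\binom{m'+n-1}{n}$ coincide with the true value $HF(m'P,d)$. Parts (i) and (iii) are formal consequences of subadditivity and of the inclusion-reversal $I_{X''}\subseteq I_X$, respectively.
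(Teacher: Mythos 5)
Your proposal is correct and follows essentially the same route as the paper: part (i) is the same squeeze of subadditivity inequalities into equalities, and part (ii) is the same Mayer--Vietoris-type argument --- your surjectivity of the restriction map $R_d\to (R/I_Y)_d\oplus(R/I_{mP})_d$ is exactly the vanishing of $(R/(I_Y+I_{mP}))_d$ in the paper's exact sequence, and your passage from $mP$ to $m'P$ via $I_{mP}\subseteq I_{m'P}$ and the constraint $d\geq m-1$ is what the paper's ``it follows that'' leaves implicit. If anything, you have spelled out that last numerical step more explicitly than the paper does.
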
 

\begin{proof} (i)
{$$ HF(X,d) =\sum_{i=1}^{s} HF(X_i,d) = 
\sum_{i=1}^{s'} HF(X_i,d)+\sum_{i=s'+1}^{s} HF(X_i,d)
$$
$$\geq HF(X',d) +\sum_{i=s'+1}^{s} HF(X_i,d) 
\geq HF(X,d) .$$
Hence the  inequalites are equalities, and we get 
 the conclusion.}

{
(ii) Since $HF(X,d)= HF(Y,d) + \binom{m+n-1}{n}$, and $I_X= I_Y \cap I_{mP}$,  from the exact sequence
$$0 \longrightarrow R/ I_Y \cap I_{mP}
 \longrightarrow R/ I_Y \oplus R/ I_{mP} 
\longrightarrow R/( I_Y + I_{mP})\longrightarrow 0,
$$
we get that $ \dim (R/( I_Y + I_{mP}))_d =0$ and 
$ HF(mP,d) = \binom{m+n-1}{n} $. It follows that   $ \dim (R/( I_Y + I_{m'P}))_d =0$ and $ HF(m'P,d) = \binom{m'+n-1}{n} $. Thus from the analogous sequence for $m'P$ we get
$$HF(X',d)= HF(Y,d) + HF(m'P,d) = HF(Y,d) + \binom{m'+n-1}{n} .
$$
}

(iii) Obvious.
\end{proof}

\begin{lem} 
Let $X= X_1+\dots + X_s \subset \mathbb P^n$ be the  union of non intersecting  closed subschemes $X_i$,  let  $s' <s$ and
$$X'= X_1+\dots X_{s'} \subset X.$$

\begin{itemize}
\item[(i)]
If  $\dim (I_{X})_d = {d+n \choose n} - \sum_{i=1}^s HF(X_i,d) $ (the expected value), then also $ \dim (I_{X'})_d $ is as expected, that is
$$\dim (I_{X'})_d = {d+n \choose n} - \sum_{i=1}^{s'} HF(X_i,d) .$$

\item[(ii)]  If  $ \dim (I_{X})_d = 0$, then  $ \dim (I_{X''})_d =0$, for any subscheme $X'' \supset X$ . \par
\end{itemize}
\end{lem}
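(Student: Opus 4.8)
The plan is to reduce both parts to the preceding Lemma \ref{BastaProvarePers=e,e*} by using the elementary dictionary between the Hilbert function and the dimension of the degree-$d$ piece of the ideal. Recall that for any closed subscheme $Z \subseteq \mathbb{P}^n$ one has $HF(Z,d) = \binom{d+n}{n} - \dim (I_Z)_d$, since $HF(Z,d) = \dim (R/I_Z)_d$ and $\dim R_d = \binom{d+n}{n}$. I would use this identity to pass back and forth between the two formulations.

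For part (i), I would first rewrite the hypothesis. The assumption $\dim (I_X)_d = \binom{d+n}{n} - \sum_{i=1}^s HF(X_i,d)$ is, by the identity above, exactly the statement that $HF(X,d) = \sum_{i=1}^s HF(X_i,d)$. Since $X = X_1 + \cdots + X_s$ is a union of non-intersecting subschemes and $X' = X_1 + \cdots + X_{s'}$ with $s' < s$, this is precisely the situation of Lemma \ref{BastaProvarePers=e,e*}(i), which yields $HF(X',d) = \sum_{i=1}^{s'} HF(X_i,d)$. Translating back through the same identity gives $\dim (I_{X'})_d = \binom{d+n}{n} - \sum_{i=1}^{s'} HF(X_i,d)$, which is the desired conclusion.

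For part (ii), the argument is immediate and is nothing more than Lemma \ref{BastaProvarePers=e,e*}(iii): if $X'' \supseteq X$, then every form vanishing on $X''$ vanishes on $X$, so $(I_{X''})_d \subseteq (I_X)_d$; hence $\dim (I_{X''})_d \leq \dim (I_X)_d = 0$, forcing $\dim (I_{X''})_d = 0$.

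There is no serious obstacle here: the entire content of this lemma is already contained in Lemma \ref{BastaProvarePers=e,e*}, and the only point one must check is that the non-intersecting hypothesis on the $X_i$ is inherited by the sub-collection $X_1,\dots,X_{s'}$, which is clear. The lemma is best viewed as a restatement of the previous one, recast in the ``dimension of the ideal'' language that is convenient for the rest of the paper.
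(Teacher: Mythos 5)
Your proof is correct and matches the paper's approach: the paper states this lemma without proof precisely because it is the translation, via the identity $HF(Z,d)=\binom{d+n}{n}-\dim (I_Z)_d$, of parts (i) and (iii) of Lemma \ref{BastaProvarePers=e,e*}, and that translation is exactly the reduction you carry out. Nothing is missing.
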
 

We now  recall the basic theorem of Hartshorne and Hirschowitz about the Hilbert function of generic lines.
\begin{thm} \cite[Theorem 0.1]{HartshorneHirschowitz} \label{HH}
Let $n, d \in \mathbb N$.
For $n\geq 3$, the ideal of the scheme $X\subset \Bbb P^n$   consisting of $s$ generic
 lines has the expected dimension, that is,
$$
\dim (I_X)_d = \max \left \{ {d+n \choose n} -s(d+1); 0 \right \},
$$
or equivalently
$$
HF(X,d) = \min \left \{ hp(\PP^n,d)={d+n \choose n}, hp(X,d)=s(d+1)
\right \},
$$
that is, $X$ has bipolynomial Hilbert function.
 \end{thm}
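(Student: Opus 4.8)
The plan is to establish the non-trivial inequality $\dim(I_X)_d \le \max\{\binom{d+n}{n}-s(d+1),0\}$; the reverse bound is automatic, since $s$ lines impose at most $s(d+1)$ linear conditions on forms of degree $d$, so that $\dim(I_X)_d \ge \binom{d+n}{n}-s(d+1)$ and $\dim(I_X)_d\ge 0$. By the upper semicontinuity of $\dim(I_X)_d$ in a flat family (see Remark \ref{degenerare2rette} and the discussion following it), to bound the generic configuration it suffices to exhibit one suitably specialized, and possibly degenerated, configuration meeting the bound. Furthermore, fixing $n$ and $d$ and choosing $s_0$ with $s_0(d+1)\le \binom{d+n}{n}<(s_0+1)(d+1)$, Lemma \ref{BastaProvarePers=e,e*} reduces the whole theorem to the two critical values $s=s_0$ and $s=s_0+1$: part (i) propagates the expected value to all $s\le s_0$, while part (iii) propagates $\dim(I_X)_d=0$ to all $s\ge s_0+1$.

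The engine is the Horace method, that is, Castelnuovo's inequality (Lemma \ref{Castelnuovo}) applied with $Y=H$ a hyperplane ($\delta=1$), after specializing $a$ of the lines to lie inside $H\cong\PP^{n-1}$ and leaving the remaining $b=s-a$ lines generic. A line contained in $H$ has empty residual and contributes itself to the trace, whereas a transverse line is its own residual and contributes a single generic point of $H$ to the trace; hence
$$\dim(I_{X,\PP^n})_d \le \dim(I_{b\text{ lines},\,\PP^n})_{d-1} + \dim(I_{a\text{ lines}+b\text{ points},\,H})_d.$$
This sets up a double induction on $n$ and on $d$, the first summand lowering $d$ and the second lowering $n$ at the cost of introducing generic points. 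Consequently the statement must be strengthened to a simultaneous assertion about the bipolynomiality of generic schemes that are unions of lines and simple points, the points being inserted and controlled by Lemma \ref{AggiungerePuntiSuY}.

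One chooses $a$ so that both summands lie in their expected ranges and add up to the target value; this is a numerical balancing that succeeds cleanly whenever the relevant binomials split evenly. The main obstacle is precisely the remainder cases, in which no integer $a$ makes the two expected values add up exactly, so that the trace on $H$ is left with one or two stray points. This is exactly where Lemma \ref{sundial} enters: degenerating a generic pair of skew lines to a degenerate conic with an embedded point places the conic $L+M$ inside $H$, contributing two lines to the trace, while $Res_H$ of the sundial is a single simple point (Remark \ref{degenerare2rette}). Relative to two transverse lines, which give two points in the trace and two lines in the degree-$(d-1)$ residual, a sundial pair thus trades trace points for a trace conic and replaces two residual lines by one residual point, shifting conditions between the two summands by the precise amount needed to absorb the remainder. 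Iterating such corrections, together with the inductive hypothesis on lines-and-points configurations in $\PP^{n-1}$ and in lower degree, closes the argument; the base cases are small $d$ and the plane $n=2$, where a union of generic lines is a reduced plane curve and generic points impose independent conditions, both checked directly.
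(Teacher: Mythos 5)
First, a point of context: the paper does not prove this theorem at all. Theorem \ref{HH} is quoted from Hartshorne--Hirschowitz \cite{HartshorneHirschowitz} with no argument given; the only thing the paper itself establishes about it is the equivalence with Theorem \ref{HH2} (Remark \ref{equivalenzaEnunciati}), via Lemmas \ref{BastaProvarePers=e,e*} and \ref{AggiungerePuntiSuY}. So your proposal has to be measured against the original proof in \cite{HartshorneHirschowitz}, and at the level of strategy your sketch does capture it: Horace's method (Castelnuovo's inequality) after specializing some lines into the auxiliary divisor, degeneration of pairs of skew lines into $3$-dimensional sundials to absorb remainders, a strengthened induction statement involving auxiliary points, and the reduction to the two critical values of $s$. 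This is also exactly the machinery this paper deploys for its new results (Propositions \ref{Propn>3} and \ref{Propn=3}).

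However, as a proof the proposal has a genuine gap, located precisely where the real difficulty lies. Your induction on $n$ requires the strengthened statement (bipolynomiality of generic unions of lines and simple points) in $\mathbb{P}^{n-1}$, and you propose $n=2$ as a base case ``checked directly.'' But that statement is \emph{false} in $\mathbb{P}^2$: lines there are divisors, so $a$ generic lines are a fixed component of every curve containing them, and for generic points one gets $\dim (I_X)_d=\max\bigl\{\binom{d-a+2}{2}-b,\,0\bigr\}$, not $\max\bigl\{\binom{d+2}{2}-a(d+1)-b,\,0\bigr\}$; already two lines in degree $2$ give $\dim (I_X)_2=1$ while the ``expected'' value is $0$. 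Consequently the hyperplane-trace argument cannot be run from $n=3$ down to the plane as you describe: the case $n=3$ must serve as the base of the induction on $n$ and needs a separate, much more delicate treatment --- in \cite{HartshorneHirschowitz}, and likewise in this paper's Proposition \ref{Propn=3}, one specializes lines onto a smooth quadric $Q\cong\mathbb{P}^1\times\mathbb{P}^1$ and computes traces as curves of bidegree $(d-a,d)$, precisely because plane traces degenerate into fixed components. Beyond this, the ``numerical balancing'' and the iteration of sundial corrections, which you assert ``close the argument,'' are the actual content of the Hartshorne--Hirschowitz paper (the choice of the number of specialized lines, the verification that both summands stay in their expected ranges in every congruence class of remainders, the low-degree base cases); asserting that they succeed is not a proof. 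In short: the skeleton is the right one, but one of its two legs (the planar base of the induction) collapses as stated, and the other (the arithmetic of the Horace step) is waved at rather than carried out.
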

\qed
\medskip

To be more precise  the following equivalent  statement is the actual theorem proved in \cite{HartshorneHirschowitz}:

\begin{thm} \cite[Theorem 0.2]{HartshorneHirschowitz} \label{HH2}
Let $n, d \in \mathbb N$. Let
$$ t = \left\lfloor{d+n \choose n} \over {d+1} \right \rfloor ; \ \ \ \ r = {d+n \choose n}-t(d+1),
$$
and let $L_1, \dots, L_{t+1}$ be $t+1$ generic lines in $\Bbb P^n$.
For $n\geq 3$, the ideal of the scheme $X\subset \Bbb P^n$   consisting of the $t$ lines
$L_1, \dots, L_{t}$  and r generic points lying on $L_{t+1}$
 has the expected dimension, that is,
$$
\dim (I_X)_d = 0 .
$$
 \end{thm}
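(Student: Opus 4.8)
The plan is to deduce Theorem \ref{HH2} directly from the version of the Hartshorne--Hirschowitz theorem already recorded as Theorem \ref{HH}, together with the point-adding criterion of Lemma \ref{AggiungerePuntiSuY}; no new geometry is required. The only arithmetic input is the defining identity $\binom{d+n}{n} = t(d+1) + r$, in which the remainder satisfies $0 \le r \le d$.

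The first step is to apply Theorem \ref{HH} to the scheme $L_1 + \cdots + L_t$ of the first $t$ generic lines. Since $\binom{d+n}{n} - t(d+1) = r \ge 0$, the maximum in that theorem equals $r$, so $\dim (I_{L_1 + \cdots + L_t})_d = r$. The second step is to apply Theorem \ref{HH} to the scheme $L_1 + \cdots + L_{t+1}$ of all $t+1$ generic lines. Here $\binom{d+n}{n} - (t+1)(d+1) = r - (d+1) \le 0$, because $r \le d$; hence the maximum is $0$ and $\dim (I_{L_1 + \cdots + L_{t+1}})_d = 0$.

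With these two facts in place, the third step is to invoke Lemma \ref{AggiungerePuntiSuY} with $X = L_1 + \cdots + L_t$, with $Y = L_{t+1}$ (a line, and therefore a reduced irreducible subscheme of $\PP^n$), and with $s = r$ generic points $P_1, \dots, P_r$ chosen on $Y$. The two hypotheses of the lemma are exactly the equalities established in the previous step: $\dim (I_X)_d = r$ and $\dim (I_{X+Y})_d = \dim (I_{L_1 + \cdots + L_{t+1}})_d = 0$. The lemma then gives $\dim (I_{X + P_1 + \cdots + P_r})_d = 0$, which is precisely the required vanishing. The boundary case $r = 0$ needs no points at all and is already settled by the first application of Theorem \ref{HH}.

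I do not anticipate any real obstacle: granting Theorem \ref{HH}, the argument is a short deduction whose only delicate point is to keep track of the inequalities $0 \le r \le d$ so that each application of Theorem \ref{HH} lands on the correct branch of the maximum. For completeness one checks that the implication can also be reversed, since the scheme of Theorem \ref{HH2} contains $L_1 + \cdots + L_t$ as a sub-union and is itself contained in $L_1 + \cdots + L_{t+1}$; thus parts (i) and (iii) of Lemma \ref{BastaProvarePers=e,e*} recover the expected dimension for an arbitrary number of generic lines. This is what justifies calling the two statements equivalent, although the reverse direction is not needed for the proof above.
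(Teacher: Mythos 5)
Your deduction is correct, and it is precisely the argument the paper itself gestures at: Theorem \ref{HH2} is stated without proof (it is quoted as the theorem actually proved by Hartshorne and Hirschowitz in \cite{HartshorneHirschowitz}), and Remark \ref{equivalenzaEnunciati} records exactly your implication --- Theorem \ref{HH} implies Theorem \ref{HH2} by Lemma \ref{AggiungerePuntiSuY} --- together with the converse implication via Lemma \ref{BastaProvarePers=e,e*}, which you also sketch correctly. Your bookkeeping is right: $0 \le r \le d$ gives $\dim (I_{L_1+\cdots+L_t})_d = r$ and $\dim (I_{L_1+\cdots+L_{t+1}})_d = \max\left\{r-(d+1),0\right\} = 0$, so the two hypotheses of Lemma \ref{AggiungerePuntiSuY} hold on the nose, and the boundary case $r=0$ is dispatched separately, as it should be.

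The only caveat is one of logical order, not of mathematics. In \cite{HartshorneHirschowitz}, and in this paper's presentation, Theorem \ref{HH2} is the primitive statement --- it is the form on which the Hartshorne--Hirschowitz induction actually runs --- and Theorem \ref{HH} is derived from it. So what you have established is the equivalence of the two statements rather than an independent proof of Theorem \ref{HH2}; if Theorem \ref{HH} were only available as a corollary of Theorem \ref{HH2}, the reasoning would be circular. Since you explicitly grant Theorem \ref{HH} as a black box from the literature, the deduction is legitimate, but it should be presented as an equivalence, not as a substitute for the induction carried out in \cite{HartshorneHirschowitz}.
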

\qed
\medskip

\begin{rem} \label{equivalenzaEnunciati} By Lemma \ref{BastaProvarePers=e,e*}, the statement of Theorem \ref {HH2} easily implies
  the one of Theorem  \ref{HH}; moreover,  by
Lemma \ref{AggiungerePuntiSuY}, it is easy to prove that also the converse holds.

\end{rem}
\medskip



We now recall the following technical result, we refer the reader
to \cite{3dimsundials} for a proof.

\begin{thm} \label{tantesundials3dim}  Let $n \geq 3$ and let
$X\subset \mathbb P^n $ be the union of  $s$ generic 3-dimensional  sundials and  $l$ generic lines. Then $X$ has bipolynomial Hilbert function, that is,
$$ HF(X,d) = \min \left\{  {d+n \choose n}; (d+1)(2s+l)\right \}
.$$
Equivalently, the following schemes
have the expected Hilbert Function in degree $d$:
$$W =
   \left \{
  \begin{matrix}
  \widehat C_1 + \dots +\widehat  C_{  s }  +P_1+\dots +P_r  & {\hbox  {for } } \  t \    {\hbox  {even } }\\
   \widehat C_1 + \dots +\widehat C_{ s }  +M +P_1+\dots P_r  & {\hbox  {for } } \ t \    {\hbox  {odd } }
   \end{matrix}
    \right. ,
     $$
     $$T =
   \left \{
  \begin{matrix}
  \widehat C_1 + \dots +\widehat  C_{   s }  +M & {\hbox  {for } } \  t \    {\hbox  {even and  }  r>0}\\
   \widehat C_1 + \dots +\widehat C_{ s+1}   & {\hbox  {for } } \ t \    {\hbox  {odd  and } r>0}
   \end{matrix}
    \right. ,
     $$
     where
      $$t=  \left\lfloor{ {d+n \choose n} \over {d+1} } \right\rfloor,
  \ \ \ \ \   r= {d+n \choose n} - t (d+1)
\ \ \ \ \     s = \left\lfloor{ \frac t 2 } \right\rfloor ,$$
where the  $\widehat C_i$ are degenerate conics with an embedded point, that is 3-dimensional sundials, the $P_i$ are generic points and $M$ is a generic line, that is,
$$\dim (I_W)_d =exp\dim (I_W)_d =    {d+n \choose n} - t (d+1)-r=0;
$$
$$\dim (I_T)_d=exp\dim (I_W)_d = \max \left \{   {d+n \choose n} - (t +1)(d+1); 0 \right \}=0.$$

\end{thm}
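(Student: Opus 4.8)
\medskip

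The plan is to reduce everything to the two vanishing statements $\dim (I_W)_d = 0$ and $\dim (I_T)_d = 0$ and to prove those. The passage from these ``critical'' configurations to the general bipolynomial formula is exactly as in Remark \ref{equivalenzaEnunciati}: Lemma \ref{BastaProvarePers=e,e*} produces the Hilbert function of any subconfiguration of $s$ sundials and $l$ lines once the ``just-filling'' case is known, while Lemma \ref{AggiungerePuntiSuY} trades the $r$ generic points for $r$ generic points on an auxiliary line, so that the component counts match the Hartshorne--Hirschowitz normalisation. Writing $E = \max\{\binom{d+n}{n} - (2s+l)(d+1),\,0\}$, one inequality is immediate: by Lemma \ref{sundial} each $3$-dimensional sundial is a flat limit of two skew lines, so $X$ is a flat degeneration of $2s+l$ generic lines, and upper semicontinuity of $\lambda \mapsto \dim (I_{X_\lambda})_d$ combined with Theorem \ref{HH} gives $\dim (I_X)_d \geq E$. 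The entire difficulty is the opposite bound $\dim (I_X)_d \leq E$, i.e. that the (degenerate) sundials and lines impose as many conditions as skew lines would; for the schemes $W$ and $T$ this is the whole statement, the trivial direction being only $\dim (I)_d \geq 0$.

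To prove $\dim (I_X)_d \leq E$ I would use the m\'ethode d'Horace, i.e. induction on $d$ and $n$ via Castelnuovo's inequality (Lemma \ref{Castelnuovo}) applied to a generic hyperplane $H \cong \mathbb P^{n-1}$. Since specialisation only enlarges the ideal, it suffices to find one specialisation $X'$ of $X$ with $\dim (I_{X'})_d \leq E$: indeed $\dim (I_X)_d \leq \dim (I_{X'})_d$ by semicontinuity. I would move a carefully chosen number of the sundials and lines so that their supports lie on $H$, leaving the rest generic, and then apply
$$\dim (I_{X'})_d \;\leq\; \dim (I_{Res_H X'})_{d-1} + \dim (I_{Tr _H X'})_d.$$
Here a line on $H$ puts its whole self into $Tr_H X'$ and nothing into $Res_H X'$; a generic line puts a single point into the trace and its whole self into the residual; and a sundial specialised with its conic $L+M$ on $H$ contributes the degenerate conic to $Tr_H X'$ while, by the final assertion of Lemma \ref{sundial} and Remark \ref{degenerare2rette}, leaving only the simple point $P$ in $Res_H X'$. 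The number of components dropped onto $H$ is fixed by requiring that both the trace term (a union of lines, degenerate conics and points in $\mathbb P^{n-1}$) and the residual term (fewer sundials and lines, plus the extra simple points, in degree $d-1$) fall exactly in the range where the inductive hypothesis applies.

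The main obstacle is closing this induction, and it is twofold. First, the trace on $H$ is no longer built from lines and $3$-dimensional sundials alone: the conics coming from the specialised sundials live inside $H \cong \mathbb P^{n-1}$, so one must either strengthen the inductive statement to cover configurations of lines, degenerate conics and points (equivalently, sundials of every span-dimension), or reduce the trace to Theorem \ref{HH} supplemented by point counts through Lemma \ref{AggiungerePuntiSuY} and Lemma \ref{BastaProvarePers=e,e*}. Second, the boundary bookkeeping is delicate: the parity of $t$ forces the extra line $M$, the remainder $r$ forces the points $P_i$, and after peeling off $H$ one must check that the residual degree $d-1$ and the reduced counts still meet the divisibility conditions needed to invoke the hypothesis on both $Tr_H X'$ and $Res_H X'$. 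I would treat the genuinely small cases by hand --- low $d$, and the base dimension $n=3$, where $H \cong \mathbb P^2$ and a traced sundial degenerates to a plane conic together with a point --- and arrange the specialisation so that each step reduces $(d,n)$ to the pair of cases $(d-1,n)$ and $(d,n-1)$, which the induction already controls.
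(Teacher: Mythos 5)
First, a point of order: the paper does not prove Theorem \ref{tantesundials3dim} at all. It is quoted as a known result, with the proof deferred to \cite{3dimsundials}. So there is no internal proof to compare against, and your proposal has to stand on its own as a proof of the statement. It does not: it is a correct strategic outline that stops exactly where the real work begins. What you get right: the reduction of the bipolynomial formula to the two vanishings $\dim (I_W)_d = \dim (I_T)_d = 0$ via Lemma \ref{BastaProvarePers=e,e*} and Lemma \ref{AggiungerePuntiSuY} (as in Remark \ref{equivalenzaEnunciati}); the observation that Lemma \ref{sundial}, semicontinuity and Theorem \ref{HH} give the easy inequality; and the identification of Castelnuovo--Horace induction as the engine. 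This is indeed the architecture of the proof in \cite{3dimsundials}.

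The gap is that the two ``obstacles'' you name at the end are the entire content of the theorem, and you resolve neither. (1) Trace: when a sundial is specialised so that its conic lies in $H$, the trace acquires a degenerate conic in $\mathbb{P}^{n-1}$, an object covered neither by Theorem \ref{HH} nor by the statement being proved inductively. You list two possible fixes but carry out neither; the mechanism that actually works (used in \cite{3dimsundials}, and visible in this paper's own proof of Proposition \ref{Propn>3}) is to pair each traced conic with a generic simple point and use Remark \ref{degenerare2rette} to see the pair as a degeneration of a sundial inside $H$ --- and this only works if the specialisation leaves enough free points in the trace, which is a genuine arithmetic constraint (the inequalities $e'\geq 0$, $e-e'-2r'\geq 0$, $e'\geq r'$ of Lemma \ref{Propn>3calcoli} exist precisely for this reason); no such counting appears in your plan. (2) Base case: $n=3$ is not a finite collection of ``small cases by hand'' --- it is an infinite family in $d$, and hyperplane-Horace genuinely fails there, because two or more lines (or a line and a conic) traced onto a plane $H\cong\mathbb{P}^2$ necessarily meet and impose dependent conditions, so the trace never has the expected Hilbert function. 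This is exactly why Hartshorne--Hirschowitz, \cite{3dimsundials}, and Proposition \ref{Propn=3} of this paper all abandon hyperplanes in $\mathbb{P}^3$ in favour of a smooth quadric $Q\cong\mathbb{P}^1\times\mathbb{P}^1$, where specialised lines can be placed in a single ruling and the trace ideal is computed as a space of curves of bidegree $(a,b)$. Without the quadric technique for $n=3$ and without the conic-plus-point degeneration and its attendant counting in the inductive step, the induction you describe cannot close.
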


\section{The main theorem in $\mathbb P^n$, for $n \geq 4$ }\label{sezP4}

In this section
we will prove (see Theorem \ref{mainThn>3}) that for $n\geq 4$, the ideal of the scheme $X\subset \Bbb P^n$   consisting of $s$ generic  lines and a generic point of multiplicity $m$ has the expected dimension.
We start with the following proposition, which, for
$m\leq d$,   is equivalent to Theorem \ref{mainThn>3} (see Remark \ref{equivalenzaEnunciati}  for an  analogous situation).

\begin{prop}  \label{Propn>3}
Let $n, d,m \in \mathbb N$, $n\geq 4$, $m\leq d$. Let

$$e= \left \lfloor   {{{d+n \choose n} -  {m+n-1 \choose n} }\over {d+1} }\right \rfloor
 ; \ \ \ \
r={d+n \choose n} -  {m+n-1 \choose n}  -  e (d+1).
 $$
The ideal of the scheme $X\subset \Bbb P^n$   consisting of $e$ generic
 lines $L_1, \dots, L_e$, $r$ generic points $P_1, \dots, P_r$ lying on a generic line $L$ and a generic point $P$ of multiplicity $m$ has the expected dimension, that is,
$$
\dim (I_X)_d = {d+n \choose n} -  {m+n-1 \choose n}  -  e (d+1) -r =0.
$$

\end{prop}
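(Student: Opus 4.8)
The plan is to prove the statement by a Castelnuovo (Horace) induction, reducing simultaneously the multiplicity $m$ and the degree $d$. The base cases are the multiplicity-free situations: when $m=0$ (or $m=1$, where the $m$-fold point is just a simple point) the scheme $X$ consists only of generic lines plus generic points on a line, so $\dim(I_X)_d=0$ is exactly the content of the Hartshorne--Hirschowitz Theorem~\ref{HH2} (together with Lemma~\ref{AggiungerePuntiSuY} and Remark~\ref{equivalenzaEnunciati}). Throughout, by upper semicontinuity of the Hilbert function it suffices to exhibit a single \emph{special} position of the lines, points, and fat point for which the ideal has the expected dimension in degree $d$; this is what legitimizes all the specializations below.

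For the inductive step I would fix a hyperplane $H\simeq\mathbb P^{n-1}$, specialize the support $P$ of the fat point onto $H$, place the line $L$ carrying the $r$ generic points inside $H$, specialize $e_1$ of the lines $L_1,\dots,L_e$ to lie in $H$, and keep the remaining $e_2=e-e_1$ lines transverse to $H$. Then Castelnuovo's inequality (Lemma~\ref{Castelnuovo} with $\delta=1$) gives
$$
\dim(I_X)_d\ \leq\ \dim\big(I_{Res_H X}\big)_{d-1}+\dim\big(I_{Tr_H X}\big)_{d}.
$$
Here the residual scheme is $Res_H X=(m-1)P+ (e_2\text{ transverse lines})$, living in $\mathbb P^n$ in degree $d-1$, while the trace is $Tr_H X= mP|_H+(e_1\text{ lines in }H)+(e_2\text{ points})+(r\text{ points on }L)$, living in $H\simeq\mathbb P^{n-1}$ in degree $d$. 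Crucially, $Res_H$ has strictly smaller degree \emph{and} multiplicity, so it is governed by the inductive hypothesis of Proposition~\ref{Propn>3} in $\mathbb P^n$, whereas $Tr_H$ is a scheme of the same type (lines $+$ points on a line $+$ an $m$-fold point) in one fewer projective dimension, governed by the inductive hypothesis in $\mathbb P^{n-1}$.

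The arithmetic then closes up cleanly: choosing $e_2=\big\lfloor(\binom{d+n-1}{n}-\binom{m+n-2}{n})/d\big\rfloor$ makes $Res_H X$ a balanced (expected-dimension) configuration in degree $d-1$, and a Pascal-type identity (splitting $\binom{d+n}{n}=\binom{d+n-1}{n}+\binom{d+n-1}{n-1}$ and $\binom{m+n-1}{n}=\binom{m+n-2}{n}+\binom{m+n-2}{n-1}$) forces $Tr_H X$ to be \emph{exactly} balanced as well, so both summands on the right vanish by induction and we conclude $\dim(I_X)_d=0$. The leftover points coming from the floor function are absorbed using the flexibility of points on a line together with Lemma~\ref{AggiungerePuntiSuY} and the monotonicity of Lemma~\ref{BastaProvarePers=e,e*}(iii), exactly as in the passage between Theorems~\ref{HH} and~\ref{HH2}.

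The main obstacle is twofold. First, the remainder bookkeeping: one cannot split the $r$ collinear points freely between trace and residual while keeping them on a single line, so the fractional part of the line-count must be managed delicately by the placement of $L$ (inside $H$ versus transverse) and by adding/removing generic points on a line via Lemmas~\ref{AggiungerePuntiSuY} and~\ref{BastaProvarePers=e,e*}. Second, and more seriously, the dimension descent of the trace eventually reaches $n=4$, where $Tr_H X$ lands in $\mathbb P^3$ with the \emph{same} pair $(d,m)$; since we allow $m\le d$, the trace can fall into the genuinely exceptional range $m=d$ of the $\mathbb P^3$ theory (Theorem~\ref{mainThn=3bis}). I expect this to be the crux: it must be circumvented by verifying that the number $e_1$ of trace-lines lies outside the bad interval $1<e_1\le d$, or by treating the $n=4$, $m=d$ case separately (using the sundial degeneration of Remark~\ref{degenerare2rette} and Theorem~\ref{tantesundials3dim} to replace pairs of lines through $P$ by sundials and build up the multiple point).
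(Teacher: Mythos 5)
Your strategy is genuinely different from the paper's, and it contains a gap that you yourself flagged but whose scope you underestimated. By specializing the support $P$ of the fat point \emph{onto} the hyperplane $H$, your trace carries the full $m$-fold point $mP|_H$ into $\mathbb{P}^{n-1}$ with the \emph{same} pair $(d,m)$. Iterating, the pure-trace branch of your induction descends, with $(d,m)$ unchanged, all the way to $\mathbb{P}^3$. There the statement you need --- that $e_1$ generic lines plus one generic $m$-fold point (plus points) impose independent conditions in degree $d$ --- is simply not available: it is \emph{false} for $m=d$ and $2\leq s\leq d$ (Theorem \ref{mainThn=3bis}(ii)), and outside that range it is only proved in the restrictive cases (a)--(f) of Theorem \ref{mainThn=3bis}; in particular, for $m<d-1$ the paper only proves it when $s\leq m+2$. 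A balanced trace in $\mathbb{P}^3$ has roughly $e_1\approx\bigl(\binom{d+3}{3}-\binom{m+2}{3}\bigr)/(d+1)$ lines, which for $3\leq m\leq d-2$ is far larger than $m+2$, so your trace lands squarely in the range that is precisely the paper's open conjecture. Consequently neither of your proposed fixes works: checking that $e_1$ avoids the interval $1<e_1\leq d$ only addresses the $m=d$ defect, and special-casing $n=4$, $m=d$ does not supply the unproven $\mathbb{P}^3$ statements needed for all the other values of $m$. The induction cannot close.

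The paper's proof avoids this trap by never letting the fat point touch the hyperplane, and this asymmetry is the whole point. The induction is on $d-m$, with base case $m=d$ handled by observing that degree-$d$ forms vanishing to order $d$ at $P$ are cones with vertex $P$, so projection from $P$ reduces the count to Hartshorne--Hirschowitz (Theorem \ref{HH2}) in $\mathbb{P}^{n-1}$. In the step $m<d$, the fat point stays \emph{off} $H$: the residual (in degree $d-1$) keeps the whole $mP$ together with $e'$ generic lines and some points, and is controlled by the inductive hypothesis (same $m$, smaller $d$) plus Lemma \ref{AggiungerePuntiSuY}; the trace contains only lines, simple points, and degenerate conics coming from $r'$ sundial degenerations of pairs of lines (Remark \ref{degenerare2rette}), and its vanishing is given by Theorem \ref{tantesundials3dim}, which holds in every $\mathbb{P}^{n-1}$ with $n-1\geq 3$ and involves no fat point at all. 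Keeping the multiplicity entirely in the residual is exactly what lets the argument work for all $n\geq 4$ while remaining consistent with the genuine failure of the statement in $\mathbb{P}^3$; any Horace scheme that pushes the fat point into the trace, as yours does, must eventually prove the $\mathbb{P}^3$ theorem it cannot have.
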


\begin{proof}

We will prove the theorem by induction on $d-m$.

Let  $d=m$.
Since for  $d=m$ any form of degree $d$ in $I_{ X}$ represents a
cone with $P$ as vertex, it follows that
$$
 \dim (I_X)_d =
  \dim (I_W)_{d} ,
$$
where $W\subset \mathbb P^{n-1}$ consists of $e$ generic lines and $r$ generic points lying on a line. Since for $d=m$ we have
${d+n \choose n} -  {m+n-1 \choose n} = {d+n -1\choose {n-1}}$, we get
$$e=
\left \lfloor   {{{d+n-1 \choose {n-1}} }\over {d+1} }\right \rfloor
 ; \ \ \ \
r={d+n -1\choose {n-1}}  -  e (d+1).
 $$

So by Theorem \ref{HH2} we get
$$
  \dim (I_W)_{d} = {d+n-1 \choose {n-1}} -e(d+1)-r=0 , $$
and we are done for $m=d$.

Assume $m<d$.
Let
$$
e'= \left \lfloor   {{{d-1+n \choose n} -  {m+n-1 \choose n} -r}\over {d} }\right \rfloor
;
$$
$$
r'={d-1+n \choose n} -  {m+n-1 \choose n} -r  -  e'd.
$$
Since $ {{d-1+n \choose n} -  {m+n-1 \choose n} -r}  \geq 0$,
we have $e' \geq 0$  (see the Appendix, Lemma \ref{Propn>3calcoli} (i)).

Notice that  $ e -e'-2r' \geq 0$ (this inequality is treated in the Appendix, Lemma \ref{Propn>3calcoli} (ii)).
Using this inequality we construct a scheme $Y $
obtained from $X$ by specializing some lines and by degenerating other pairs of lines
{into a  hyperplane $H \simeq \Bbb P^{n-1}$.}

More precisely,   we specialize $e-e'-2r'$ lines into $H$ and we degenerate $r'$ pairs of lines in order to obtain  the following specialization of $X$:
$$Y=
 \widehat C_1 + \dots +\widehat C_{r'}  + M_1+\dots+M_{ e-e'-2r' } +L_1 +\dots + L_{e'} +mP+
 P_1+\dots+ P_r,$$
 where the $M_i\subset H$ are  generic lines and  the $ \widehat C_i \subset H_i \simeq \mathbb P^3$  are $3$-dimensional sundials such that      $ \widehat C_i $ is the union of a  degenerate conic $C_i$  lying on $H$ and a double point $2Q_i |_{H_i}\not\subset H$.

So we have
 $$Res_H Y =  Q_1 + \dots +Q_{r'}   +L_1 +\dots + L_{e'} +mP+
 P_1+\dots+ P_r  \subset \Bbb P^{n},$$
$$Tr_H Y  =   C_1 + \dots + C_{r'}  + M_1+\dots+M_{ e-e'-2r' } +T_1 +\dots + T_{e'}  \subset H \simeq  \Bbb P^{n-1},$$
where $T_i= L_i \cap H$ and the $T_i$ are generic points.

Since $e' \geq r'$ (this inequality is proved in the Appendix, Lemma \ref{Propn>3calcoli} (iii)) and $r' \leq d-1$, by Remark \ref{degenerare2rette}, Lemma \ref{BastaProvarePers=e,e*}
and Theorem \ref{tantesundials3dim}, we get that the dimension of  $ \dim  (I_{ Tr_H Y  })_{d} $ is as expected, that is,
$$
 \dim  (I_{ Tr_H Y  })_{d}= {d+n-1 \choose {n-1}} - r'(2d+1) - (e-e'-2r')(d+1)-e'
 $$
 $$= {d+n-1 \choose {n-1}} - (e-e')(d+1)+r'-e'
$$
 $$= {d+n-1 \choose {n-1}} - e(d+1)+{d-1+n \choose n} -  {m+n-1 \choose n} -r
$$
 $$= {d+n-1 \choose {n-1}} - e(d+1)+{d-1+n \choose n}  -
 {d+n \choose n}  + e (d+1)=0.
$$

Now we compute the dimension of the Residue.
Let
$$Res_H Y=Y_1+Y_2,$$
where
$$Y_1= Q_1 + \dots +Q_{r'}   +L_1 +\dots + L_{e'} +mP ,
$$
$$Y_2=  P_1+\dots+ P_r \subset L .
 $$

By the inductive hypothesis , and since $r \leq d$ we have that

$$ \dim  (I_{ Y _1  })_{d-1} ={d-1+n \choose {n}} - r'-e'd- {m+n-1 \choose n}
$$
$$=
{d-1+n \choose {n}}-{d-1+n \choose n} +  {m+n-1 \choose n} +r  + e'd-e'd- {m+n-1 \choose n}
=r,
$$
and
$$ \dim  (I_{ Y _1+L  })_{d-1} =\max \{r-d; 0 \} = 0.$$
Hence, by Lemma \ref{AggiungerePuntiSuY} we get
$$\dim  (I_{ Res_H Y })_{d-1} =0.$$
Now, since $ \dim  (I_{ Tr_H Y  })_{d}=\dim  (I_{ Res_H Y })_{d-1} =0$, by Castelnuovo's Inequality (see Lemma \ref{Castelnuovo}) the conclusion follows.

\end{proof}

\begin{thm}  \label{mainThn>3}
Let $n, d,s,m \in \mathbb N$.
For $n\geq 4$, the ideal of the scheme $X\subset \Bbb P^n$   consisting of $s$ generic
 lines and a generic point $P$ of multiplicity $m$ has the expected dimension, that is,
$$
\dim (I_X)_d = \max \left \{ {d+n \choose n} -  {m+n-1 \choose n} -s(d+1), 0 \right \}.
$$
 \end{thm}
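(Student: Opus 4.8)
The plan is to deduce Theorem \ref{mainThn>3} from Proposition \ref{Propn>3} by treating the two regimes $m \leq d$ and $m > d$ separately, exactly as the structure of the paper suggests. For $m \leq d$, the proposition already establishes that a \emph{particular} degeneration of $X$, namely $e$ generic lines plus $r$ generic points on an auxiliary line plus the $m$-fold point, imposes independent conditions (i.e.\ $\dim(I_{\cdot})_d = 0$ with no excess). I would first argue that this configuration is a specialization of $s$ generic lines together with the $m$-fold point whenever $s \geq e$: by semicontinuity of the Hilbert function (as invoked after Remark \ref{degenerare2rette}), collapsing lines and points can only increase $\dim(I_X)_d$, so once the degenerate scheme has the expected (minimal) dimension, the generic scheme with at least as many conditions does too. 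Concretely, I expect to run the same passage from the ``points-on-a-line'' statement to the ``full lines'' statement that Remark \ref{equivalenzaEnunciati} carries out for the Hartshorne--Hirschowitz theorems: Lemma \ref{BastaProvarePers=e,e*} strips off surplus lines when the map is surjective, and Lemma \ref{AggiungerePuntiSuY} converts the ``$r$ points on a line'' into a genuine extra line in the deficient range. This handles every $s$ once $m \leq d$, giving both the surjective case ($\binom{d+n}{n} \leq \binom{m+n-1}{n} + s(d+1)$) and the injective case.

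The remaining regime is $m > d$. Here the key observation is that when $m > d$, every form of degree $d$ vanishing to order $m$ at $P$ must vanish identically in a neighborhood of $P$ in the strongest sense: since $\binom{m+n-1}{n}$ already exceeds, relative to the relevant quantities, what a degree-$d$ form can absorb, the $m$-fold point alone forces $(I_{mP})_d = (0)$ as soon as $m > d$. Indeed $\dim(I_{mP})_d = \binom{d+n}{n} - \binom{m+n-1}{n}$ when $m \leq d$, and for $m > d$ one has $(I_{mP})_d = (0)$ outright because a degree-$d$ form cannot have a point of multiplicity $m > d$ without being zero. Therefore for $m > d$ the expected value $\binom{d+n}{n} - \binom{m+n-1}{n} - s(d+1)$ is negative, so the expected dimension is $0$, and since $(I_X)_d \subseteq (I_{mP})_d = (0)$ the claim is immediate from Lemma \ref{BastaProvarePers=e,e*}(iii). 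I would state this as a short separate paragraph rather than folding it into the induction.

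The main obstacle I anticipate is the bookkeeping in the reduction from Proposition \ref{Propn>3} to the theorem, specifically verifying that the proposition's configuration really is a flat specialization of the theorem's configuration and that the numerics of $e$ and $r$ line up with an arbitrary $s$ in both the surjective and non-surjective ranges. When $s(d+1) \geq \binom{d+n}{n} - \binom{m+n-1}{n}$ (surjective case) I need $s \geq e$ and then peel off extra lines via Lemma \ref{BastaProvarePers=e,e*}(i); when $s(d+1) < \binom{d+n}{n} - \binom{m+n-1}{n}$ (injective case) the excess $\max\{\binom{d+n}{n} - \binom{m+n-1}{n} - s(d+1), 0\}$ must be shown to equal the actual $\dim(I_X)_d$, which follows by adding generic points on a line via Lemma \ref{AggiungerePuntiSuY} and Theorem \ref{HH2}-style reasoning until the count is saturated. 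None of these steps is deep, but each requires checking that the relevant intermediate scheme still satisfies the hypotheses (non-intersecting components, surjectivity where needed), so I would organize the proof as: (1) dispose of $m > d$; (2) for $m \leq d$, invoke Proposition \ref{Propn>3} in the boundary/critical case $s = e$ with $r$ points; (3) use Lemma \ref{BastaProvarePers=e,e*} and Lemma \ref{AggiungerePuntiSuY} to pass to arbitrary $s$, exactly mirroring Remark \ref{equivalenzaEnunciati}.
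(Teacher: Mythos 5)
Your overall architecture is the paper's: dispose of $m>d$ directly, and for $m\leq d$ reduce to Proposition \ref{Propn>3}. Your $m>d$ paragraph is complete and correct ($(I_{mP})_d=(0)$ and $\binom{m+n-1}{n}\geq\binom{d+n}{n}$, so actual and expected dimensions both vanish), and the paper's own proof of the $m\leq d$ case is indeed just ``Proposition \ref{Propn>3} plus Lemma \ref{BastaProvarePers=e,e*}''. However, your execution of that reduction has two genuine problems.

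First, the claim that the Proposition's configuration ``is a specialization of $s$ generic lines together with the $m$-fold point'' is false: $r$ points on a line cannot be a flat limit of a line (the Hilbert polynomials differ), so semicontinuity is not the available mechanism. The correct relation is containment: for $s\geq e+1$ the theorem's scheme, with one of its generic lines taken to be $L$, \emph{contains} the Proposition's scheme, and Lemma \ref{BastaProvarePers=e,e*}(iii) then gives $\dim(I_X)_d=0$. Second, and more seriously, you have interchanged the tools for the two regimes, and in the injective regime your proposed step is circular. In the surjective range you cannot ``peel off extra lines via Lemma \ref{BastaProvarePers=e,e*}(i)'': part (i) requires the Hilbert function of the larger union to equal the \emph{sum} of the component Hilbert functions, which fails as soon as that sum exceeds $\binom{d+n}{n}$; the correct tool there is part (iii), as above. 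In the injective range ($s\leq e$) the correct tool is part (i): by the Proposition, the union of the $e$ lines, the $r$ points and $mP$ has Hilbert function $\binom{d+n}{n}=e(d+1)+r+\binom{m+n-1}{n}$, so peeling off the surplus $e-s$ lines and the $r$ points yields exactly the expected dimension for $s$ lines $+\,mP$. Lemma \ref{AggiungerePuntiSuY} cannot do this job: its hypothesis is that $\dim(I_X)_d$ equals the number of points to be added, which is precisely the statement you are trying to prove, so invoking it ``until the count is saturated'' begs the question; it is the tool for the \emph{converse} direction in Remark \ref{equivalenzaEnunciati} (deducing a Theorem \ref{HH2}-type statement from a Theorem \ref{HH}-type one), which is not the direction needed here. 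With these two swaps corrected, your argument coincides with the paper's proof.
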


\begin{proof}
Obvious for  $m>d$.
For $m\leq d$  the conclusion follows from Proposition \ref{Propn>3} and  Lemma \ref{BastaProvarePers=e,e*}.

\end{proof}

\section{The main theorem in $\mathbb P^3$ }\label{sezP3}

\begin{prop}  \label{Propn=3}
Let $d \in \mathbb N$,  $d \geq 3$. Let

$$e= \left \lfloor   {{{d+3 \choose 3} - 4}\over {d+1} }\right \rfloor
 ; \ \ \ \
r={d+3 \choose 3} -  4  -  e (d+1).
 $$
The ideal of the scheme $X\subset \Bbb P^3$   consisting of $e$ generic
 lines $L_1, \dots, L_e$, $r$ generic points $P_1, \dots, P_r$ and a generic double point
{supported on  $P$}
  has the expected dimension, that is,
$$
\dim (I_X)_d = {d+3 \choose 3} -  4  -  e (d+1) -r =0.
$$

\end{prop}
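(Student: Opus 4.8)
The plan is to proceed as in Proposition \ref{Propn>3}, reducing to lines and sundials by Castelnuovo's inequality (Lemma \ref{Castelnuovo}). I would fix a surface section of $\mathbb P^3$ — as a first attempt a generic plane $H\simeq\mathbb P^2$, so $\delta=1$ — and specialise $X$ to a scheme $Y_0$ by sinking some of the $e$ lines into $H$, degenerating some pairs of lines into $3$-dimensional sundials $\widehat C_i$ (Remark \ref{degenerare2rette}) whose degenerate conics lie in $H$, and positioning the double point relative to $H$, keeping the remaining lines and the $r$ points generic. By upper semicontinuity of the Hilbert function it suffices to show $\dim(I_{Y_0})_d=0$, and Castelnuovo reduces this to the simultaneous vanishing of $\dim(I_{Tr_H Y_0,\,H})_d$ and $\dim(I_{Res_H Y_0})_{d-1}$.

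With the support $P$ placed on $H$ the double point splits conveniently: $Tr_H(2P)=2P|_H$ contributes a planar double point (three conditions) while $Res_H(2P)=P$ is a \emph{simple} point. This is the feature, special to $m=2$, that removes every fat point from the residue, so that $Res_H Y_0$ is a generic union of lines and simple points in $\mathbb P^3$; its Hilbert function is the expected one by the Hartshorne--Hirschowitz Theorem \ref{HH} together with Lemma \ref{AggiungerePuntiSuY}, the latter adding the simple points — among them the sundial vertices, which lie on $H$ — as generic points of an irreducible variety. The trace $Tr_H Y_0$ is a generic union in $\mathbb P^2$ of one double point, some lines and some points; it too has the expected Hilbert function, since a degree-$d$ form through $g$ generic lines factors as $\ell_1\cdots\ell_g$ times a form of degree $d-g$ on which the double point and the points impose independent conditions.

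The crux — and the reason $n=3$ differs sharply from the case $n\ge4$ — is that the section is now a \emph{surface}, so lines sunk into it are no longer additive: in a plane $g$ lines impose $\binom{d+2}{2}-\binom{d-g+2}{2}$ conditions rather than $g(d+1)$. Consequently the trace saturates long before the residue is fed, and the two vanishings must compete for the same $e$ lines, one set of lines supplying the residue (through their residual lines in $\mathbb P^3$) and another set supplying the trace (through their images in $H$). One must therefore choose the numbers of sunk lines, sundials and generic lines so that trace and residue fill their complementary degrees \emph{simultaneously}; proving that compatible non-negative integers exist is the heart of the argument and the exact analogue of the appendix inequalities behind Proposition \ref{Propn>3}. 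Since $X$ is perfectly balanced — it imposes exactly $\binom{d+3}{3}$ conditions — there is no slack, and I expect a single plane to suffice only in low degree and to be insufficient already at $d=4$. One is then led to replace the plane by a smooth quadric $Q$, whose larger linear system $\mathcal O_Q(d,d)$ and whose two rulings (which absorb trace conditions without touching the residue) relax the competition, at the cost of a more delicate independence analysis on $Q\simeq\mathbb P^1\times\mathbb P^1$. That this balancing can just fail to be achievable is precisely what one expects to produce the defective schemes in the borderline family $m=d$ singled out in the Introduction.
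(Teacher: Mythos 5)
Your strategic diagnosis is largely right, and it matches the paper: a single plane cannot work (with the exact balance $\binom{d+3}{3}=4+e(d+1)+r$ there is no slack, coplanar lines lose $\binom{a}{2}$ conditions, and the roughly $\binom{d+3}{3}/(d+1)$ available lines can never fill a plane trace of size $\binom{d+2}{2}$), and the correct tool is a smooth quadric $Q$ with lines specialized into one ruling, the trace being analyzed as curves of bidegree $(d-g,d)$ on $\mathbb P^1\times\mathbb P^1$. (One small inaccuracy: the paper \emph{does} use a plane at $d=4$, namely the plane through $P$, the vertex of a sundial and $P_1$; the quadric enters only for $d\geq 5$.) But what you have written is a strategy, not a proof. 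You yourself flag that ``proving that compatible non-negative integers exist is the heart of the argument,'' and that heart is entirely absent: there is no choice of how many lines go into the ruling, no decision of which points go onto $Q$, no verification that trace and residue both vanish, and no treatment of the low-degree cases. The paper supplies all of this by an induction on $d$ (which your plan never sets up), with base cases $d=3,4$ and a three-way case division $d\equiv 0,1,2 \pmod 3$, each with explicit numerics.

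Moreover, the one concrete mechanism you commit to --- put $P$ on the splitting surface so that the residue contains no fat point and is handled by Theorem \ref{HH} plus Lemma \ref{AggiungerePuntiSuY} --- provably cannot be carried out when $d\equiv 0 \pmod 3$. Since the balance is exact, for Castelnuovo (Lemma \ref{Castelnuovo}) to finish one needs the trace (capacity $(d+1)^2$) and the residue (capacity $\binom{d+1}{3}$, note $(d+1)^2+\binom{d+1}{3}=\binom{d+3}{3}$) to be filled \emph{exactly} and with \emph{no} loss of conditions; in particular all curves specialized onto $Q$ must lie in a single ruling (a sundial conic on $Q$ has a component in each ruling, hence meets every ruling line, and dropping the ruling lines altogether leaves the trace far too small to vanish). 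So write $d=3h$, whence $r=3h-3$; if $P\in Q$, $c$ lines stay generic and $p_Q$ of the points $P_i$ are put on $Q$, the residue is $c$ generic lines plus $1+(r-p_Q)$ simple points, and exact filling in degree $d-2$ reads $c(3h-1)+3h-2-p_Q=\binom{3h+1}{3}$. Since $\binom{3h+1}{3}=(3h-1)\cdot\frac{h(3h+1)}{2}$ is divisible by $3h-1$, this forces $p_Q\equiv 3h-2 \pmod{3h-1}$, which has no solution with $0\leq p_Q\leq 3h-3$. This arithmetic obstruction is precisely why the paper's proof is an induction on $d$: for $d\equiv 0\pmod 3$ it keeps the double point $2P$ \emph{off} the quadric, so the residue is again lines plus one double point plus points --- a scheme of the same type in degree $d-2$, disposed of by the inductive hypothesis rather than by Hartshorne--Hirschowitz. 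Without an induction, or some substitute for it, your plan cannot close this case.
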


\begin{proof}

We will prove the theorem by induction on $d$.
\\

For  $d=3$ we have $e=4$, $r=0$ so
$$X= 2P+L_1+ \cdots +L_4.$$
{Since the trace of $X$ on the plane $<P,L_i>$ is formed by the line $L_i$, one double point and three simple points, then the surfaces defined by the forms of degree 3 in $I_X$ have the plane $<P,L_i>$ as a fixed component.
But the four  planes $<P, L_i>$  cannot be fixed components for  a surface  of degree $3$.}
 It follows that $\dim (I_X)_3 =0$.
\\

For $d=4$  we have $e=6$, $r=1$ so
$$X= 2P+L_1+ \cdots +L_6 + P_1. $$
Now we degenerate the scheme $X$:
first we degenerate the lines $L_1$ and $L_2$, so that they become a 3-dimensional sundial
$ \widehat C $, then we specialize the line $L_3$ on the plane $H=< P, R, P_1 >$,
where$R$ is the   double point  of $ \widehat C$.
Let
$$\widetilde X = 2P+\widehat C  +L_3+ \cdots +L_6 + P_1$$
be the degenerate scheme.

The trace of $\widetilde X$ on the plane $H$ is
$$Tr_H \widetilde X = 2P|H+2R |_H  +L_3+ P_1+ (L_4+L_5+L_6)\cap H \subset H\simeq \mathbb P^2 ,$$
hence
$$\dim (I_ {Tr_H \widetilde X})_4=\dim (I_ { Tr_H \widetilde X -L_3})_3.
$$
Since $(Tr_H \widetilde X -L_3)$ is the union of two double points and four simple points, it follows that
$\dim (I_ {Tr_H \widetilde X})_4 = 0$. So $H$ is a fixed component for the forms of
 $ (I_ { \widetilde X})_4$, and we have
 $$ \dim(I_ { \widetilde X})_4  =  \dim(I_ {Res _H{ \widetilde X}})_3, $$
where ${Res _H{ \widetilde X}}$ is the union of three lines, a point and  a degenerate conic $C$, say
$${Res _H{ \widetilde X}}= P+C+L_4+L_5+L_6.
$$

Now, if we degenerate  $P$ and  $C$, we obtain again the sundial
$\widehat C$, so, by Theorem \ref{tantesundials3dim}  we have
 $$\dim (I_ {  {Res _H{ \widetilde X}}  })_3 = 0, $$
and from here we get  $\dim (I_ X)_4 = 0. $
\\

Now let $d \geq 5$. Let $Q$ be a smooth quadric: we will  specialize some of the lines of  the scheme
$X$ on $Q$.
We consider three cases.
\\
 \par
 {\it Case 1}: $d \equiv 0$ mod 3.  \\
 Let $d = 3h$. Note that, since $d \geq 5$, then $h \geq 2$. We have:
 $$e=   \frac {(h+1)(3h+2)}{ 2} -1,  \ \ \ \ \   r=3(h-1)\geq 3 ,$$
 $$X = 2P +L_1+ ... +L_e +P_1+....+P_r.$$
Let $\widetilde X$ be the scheme obtained from $X$ by  specializing $2h+1$   lines in such a way that the lines $L_1,\dots ,L_{2h+1}$ become  lines of the same ruling on $Q$, (the lines $L_{2h+2},\dots ,L_e$ remain generic lines,  not lying on $Q$), and by specializing on $Q$ the points $P_1$ and $P_2$.  \\
  We have
 $$
 Res_Q  {\widetilde X} =
 2P +L_{2h+2}+ ... +L_e +P_3+....+P_r .$$
 By the inductive hypothesis we have:
 $$\dim (I_{Res_Q  {\widetilde X}} )_{d-2} =
 {3h+1 \choose 3}- 4 - (e-2h-1)(3h-1)- (r-2)
 $$
 $$= {\frac {h(3h+1)(3h-1)} 2}- 4 - {\frac {(h+1)(3h-2)}2}(3h-1)- (3h-5)=0
 .
 $$
 Now
  $$
 Tr_Q  {\widetilde X} =
 L_1+\dots + L_{2h+1}+ Tr_Q(L_{2h+2}+ ... +L_e) +P_1+P_2.$$
 Since the trace on $Q$ of the $(e-2h-1$) lines $L_{2h+2}, \dots,L_e $  consists of $2(e-2h-1)$ generic points, we have that
 $Tr_Q  {\widetilde X} $ consists of $(2h+1)$ lines of the same ruling, and $(2e-4h)$ generic points.
Thinking of $Q$ as $\mathbb P^1 \times \mathbb P^1$, we see that the forms  of degree $3h$ in the ideal of
{$Tr_Q  {\widetilde X} $ }
are  curves of type $(3h-(2h+1), 3h)=(h-1,3h)$ in $\mathbb P^1 \times \mathbb P^1$ passing through
$(2e-4h)$ generic points. Hence
$$\dim ( I_{Tr_Q  {\widetilde X}})_{3h} =h(3h+1)-2e+4h=0
.$$
So by Lemma \ref {Castelnuovo} and by the semicontinuity of the Hilbert function
we get $ \dim (I_{X})_{3h}=   0.$
\\
\par

 {\it Case 2}: $d \equiv 2$ mod 3.  \\
 For computation of this case, recall that we will
 think of $Q$ as $\mathbb P^1 \times \mathbb P^1$ and that (see, for instance, \cite [Section 2] {CGG4}) in the case we are treating each of
 the double points on $Q$ will give three independent condition to our forms.

 Let $d = 3h+2$. We have:
 $$
  \begin{matrix}
{\hbox  {for } } \ h=1: \hfill &  d =5 \hfill ;&    e =8 \hfill ;&  r=4  \hfill   ; \\
 {\hbox  {for } } \ h=2:   \hfill   & d =8 \hfill ;&     e=   17 \ \hfill   ;&  r=8  \hfill    ;\\
 {\hbox  {for } } \ h\geq 3:      & d =3h+2 \   ;&     e=  \frac {3(h+1)(h+2)}{ 2} \  ;&  r=h-3  \  .\\
   \end{matrix}
$$
For $h=1$, we have
$$ X= 2P+L_1+ \cdots +L_8 +P_1+ \cdots+P_4. $$
Specialize the scheme $X$  in such a way that the lines $L_1,\dots ,L_{4}$ become  lines of the same
ruling on $Q$, and the points $P$ and $P_1$ become  points on $Q$. We get
 $$
 Res_Q  {\widetilde X} =  P+L_5+ \cdots +L_8 +P_2+ \cdots+P_4 ,
 $$
{$$
 Tr_Q  {\widetilde X} =  2P|_Q+L_1+ \cdots +L_4 
+ Tr_Q(L_{5}+ ... +L_8) +P_1 ,
 $$}
 and we easily get
 $$\dim (I_{Res_Q  {\widetilde X}} )_{3} = 20-1-16-3=0,
 $$
  $$\dim (I_{Tr_Q  {\widetilde X}} )_{5} = 12-3-8-1=0.
 $$

For $h=2$, we have
$$ X= 2P+L_1+ \cdots +L_{17} +P_1+ \cdots+P_8. $$
Specialize the scheme $X$  so that the lines $L_1,\dots ,L_{6}$ become  lines of the same
ruling on $Q$, and the points $P$, $P_1$ and $P_2$ become  points on $Q$.
We get
 $$
 Res_Q  {\widetilde X} =  P+L_7+ \cdots +L_{17} +P_3+ \cdots+P_8,
 $$
{$$
 Tr_Q  {\widetilde X} =  2P|_Q+L_1+ \cdots +L_6 
+ Tr_Q(L_{7}+ ... +L_{17}) +P_1+P_2 ,
 $$}
 and we have
 $$\dim (I_{Res_Q  {\widetilde X}} )_{6} = 84-1-77-6=0,
 $$
  $$\dim (I_{Tr_Q  {\widetilde X}} )_{8} = 27-3-22-2=0.
 $$

For $h\geq 3$, we have
$$ X= 2P+L_1+ \cdots +L_{e} +P_1+ \cdots+P_{h-3}. $$
Now we degenerate the lines $L_1$ and $L_2$, so that they become a 3-dimensional sundial
$ \widehat C =C+2R$, where C is a degenerate conic and $2R$ is a double point, then we specialize
the points $R$, $P$, $P_1\dots P_{h-3}$ so that they become  points on $Q$, and
the lines $L_3,\dots ,L_{2h+4}$ so that they become  lines of the same
ruling on $Q$.
Let $\widetilde X$ be the specialized scheme.
We have
 $$
 Res_Q  {\widetilde X} =  P+C+L_{2h+5}+ \cdots +L_e,
 $$
 and, by Remark \ref{degenerare2rette}, we get
 $$\dim (I_{Res_Q  {\widetilde X}} )_{3h} ={3h+3 \choose 3} -2(3h+1)-(e-2h-4)(3h+1)=0.
 $$
Moreover
{$$
 Tr_Q  {\widetilde X} 
$$
$$=  2P|_Q+2R|_Q+L_3+ \cdots +L_{2h+4}
+ Tr_Q(L_{2h+5}+ ... +L_e)  +P_1+\cdots+P_{h-3},
 $$}
and  we get
  $$\dim (I_{Tr_Q  {\widetilde X}} )_{3h+2} = (h+1)(3h+3) -3-3-2-2(e-2h-5+1)-(h-3) =0.
 $$
\\
So by Lemma \ref {Castelnuovo} and by the semicontinuity of the Hilbert function
we get $ \dim (I_{X})_{3h+2}=   0.$

 {\it Case 3}: $d \equiv 1$ mod 3.  \\
 Let $d = 3h+1$. Note that $h\geq 2$.  We have:
 $$
  e=  \frac {(h+1)(3h+4)}{ 2} -1 \  ;   \ \ \  r=3h-2   .
$$
Specialize the scheme $X$  in such a way that the lines $L_1,\dots ,L_{2h+1}$ become  lines of the same
ruling on $Q$, and the points $P$ and $P_1, \dots, P_{2h-1}$ become  points on $Q$. Let $\widetilde X$ be the specialized scheme.

So
   $$
 Res_Q  {\widetilde X} =
 P +L_{2h+2}+ ... +L_{e} +P_{2h}+....+P_{3h-2} ,$$
 and by Theorem \ref{HH} we have
   $$\dim (I_{Res_Q  {\widetilde X}} )_{3h-1} =
  {3h+2 \choose 3}- 1-3h(e-2h-1)-(h-1)
 $$
 $$
 = {\frac {h(3h+2)(3h+1)} 2}- 1 - {\frac {9h^2(h+1)}2}-h+1=0
 .
 $$
The trace of $\widetilde X$ on $Q$ consists of the $(2h+1)$ lines of the same ruling $L_1,\dots ,L_{2h+1}$,  the double point $P$,  the simple points
$P_1, \dots, P_{2h-1}$, and the trace of the lines $L_{2h+2}, \dots ,L_{e}$.
As usual,
thinking of $Q$ as $\mathbb P^1 \times \mathbb P^1$, we see that the forms  of degree $3h+1$ in the ideal of
$Tr_Q  {\widetilde X} $ are  curves of type $((3h+1)-(2h+1), 3h+1)=(h,3h+1)$ in $\mathbb P^1 \times \mathbb P^1$. Hence, since it is easy to prove that the double point $P$ gives $3$ independent conditions to our forms
(see, for instance, \cite [Section 2] {CGG4}), we have
$$\dim ( I_{Tr_Q  {\widetilde X}})_{3h+1} =(h+1)(3h+2)-3-(2h-1)-2(e-2h-1)=0.
$$

So also in this case, by Lemma \ref {Castelnuovo} and by the semicontinuity of the Hilbert function,
we get $ \dim (I_{X})_{3h+1}=   0.$
\\

\end{proof}

\begin{thm}  \label{mainThn=3bis}
Let $d,s,m \in \mathbb N$, $d \geq 1$. Let   $X\subset \Bbb P^3$ be the scheme
  consisting of $s \geq 1$ generic  lines and a generic point $P$ of multiplicity $m\geq1$.

 \begin{itemize}
 \item[ (i)] The ideal of  $X\subset \Bbb P^3$   has the expected dimension, that is,

$$
\dim (I_X)_d  = exp \dim (I_X)_d = \max \left \{ {d+3 \choose 3} -  {m+2 \choose 3} -s(d+1), 0 \right \},
$$

(a) for $m>d$;

(b) for $m=d$ and $s>d$, or for $m=d$ and $s=1$;

(c) for $m=d-1$;

(d) for $ m<d-1$ and $1 \leq s \leq m+2$;

(e) for $m= 2$, and $d \geq 3$;

(f)  for $m=1$.
\\

 \item[ (ii)]

For
 $m=d\geq 2$ and $2 \leq s \leq d$,  the dimension of  $ (I_X)_d $ is
$$
\dim (I_X)_d ={d-s+2 \choose 2} \neq exp \dim (I_X)_d ,
$$
and the defect is:
$$\delta = \left \{
\begin{matrix}
 {s \choose 2} &\   \    \hbox   {for } \ &    s \leq {d+2 \over 2}  ; \\
\  &\                \\
 {d-s+2 \choose 2}  &\   \    \hbox   {for } &     {d+2 \over 2}\leq s\leq d     \\
\end{matrix}
\right .
.$$
 \end{itemize}
 \end{thm}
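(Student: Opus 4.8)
The plan is to organize the argument by the size of $m$ relative to $d$, treating $m=d$ as the conceptual core (it produces both part (ii) and case (b) at once) and reducing every $m<d$ statement to a Castelnuovo induction whose base cases are $m=1$ and $m=d-1$. The case $m>d$ (case (a)) is immediate: a nonzero degree-$d$ form cannot be singular to order $m>d$ at a point, so $\dim(I_X)_d=0$, while $\binom{m+2}{3}\geq\binom{d+3}{3}$ forces the expected value to be $0$ as well. For $m=d$ I would choose coordinates with $P=[0:0:0:1]$, so that $(I_{dP})_d=(\mathfrak m_P^d)_d$ consists of forms in $x_0,x_1,x_2$ only, i.e. of cones with vertex $P$; this identifies $(I_{dP})_d$ with the degree-$d$ forms on $\mathbb P^2$. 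Projecting from $P$, each generic line $L_i$ becomes a generic line $\ell_i\subset\mathbb P^2$, and $F$ vanishes on $L_i$ iff $\ell_i\mid F$. Hence $(I_X)_d$ is identified with the degree-$d$ plane curves divisible by $\ell_1\cdots\ell_s$, of dimension $\binom{d-s+2}{2}$ for $s\le d$ and $0$ for $s>d$. This is exactly the value in part (ii); subtracting $\max\{\binom{d+2}{2}-s(d+1),0\}$ gives the defect, which equals $\binom{s}{2}$ when $\binom{d+2}{2}-s(d+1)\ge 0$ (i.e. $s\le (d+2)/2$) and $\binom{d-s+2}{2}$ otherwise, and vanishes for $s=1$ or $s>d$ (case (b)).

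For $m<d$ I would, following Remark \ref{equivalenzaEnunciati} and Lemma \ref{BastaProvarePers=e,e*}, reduce each assertion to the vanishing $\dim(I_X)_d=0$ of a critical scheme (or to the direct positive count for small $s$), and run a Castelnuovo induction (Lemma \ref{Castelnuovo}) on $d$. Case (f), $m=1$, is Theorem \ref{HH} plus one application of Lemma \ref{AggiungerePuntiSuY} to add the generic point; case (e), $m=2$, is Proposition \ref{Propn=3} together with Lemma \ref{BastaProvarePers=e,e*}. For case (c), $m=d-1$, I would cut with a generic plane $H\ni P$: then $Tr_H(mP)=(d-1)P|_H$ is a fat point of multiplicity $d-1$ in $H\cong\mathbb P^2$ while $Res_H(mP)=(d-2)P$, so the residual $s\text{ lines}+(d-2)P$ in degree $d-1$ is again of type $m=\deg-1$ and the induction closes at $d=2$ (which is case (f)). The trace $(d-1)P|_H+s\text{ generic points}$ is non-special: the degree-$d$ plane curves singular to order $d-1$ at one point form a $(2d+1)$-dimensional system whose base locus is $P$ alone, so the $s$ generic points impose independent conditions and $\dim(I_{Tr_HX})_d=2d+1-s$. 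Feeding the inductive residual value $\binom{d+2}{3}-\binom{d}{3}-sd$ and this trace value into Castelnuovo, the binomials telescope (via $\binom{d+3}{3}-\binom{d+1}{3}=(d+1)^2$) to the expected dimension $(d+1)^2-s(d+1)$ of $X$.

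Case (d), $m<d-1$ with $s\le m+2$, runs the same machine with a generic plane $H\not\ni P$, so that $Res_H X=s\text{ lines}+mP$ in degree $d-1$ (same multiplicity) and $Tr_H X=s$ generic points in $\mathbb P^2$; the induction lowers $d$ with $m$ fixed until it reaches $d=m+1$, which is case (c). Once again the Castelnuovo sum telescopes to the expected value of $X$, but only while the residual stays in its polynomial (non-negative) regime at every step. The tightest demand occurs at the bottom, where the residual is the case-(c) scheme in degree $m+1$, whose expected dimension is $(m+2)(m+2-s)$. This is $\ge 0$ exactly when $s\le m+2$, which is the origin of the hypothesis in (d). The large-$s$ sub-cases, where the expected value is $0$, then follow from the critical configuration by monotonicity, Lemma \ref{BastaProvarePers=e,e*}(iii).

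The step I expect to be the main obstacle is keeping Castelnuovo's inequality tight: being only an inequality, at each inductive step one must certify that both $Res_H X$ and $Tr_H X$ attain their expected dimensions and that neither has slipped into a special (defective) regime. This is precisely the phenomenon that forces $s\le m+2$ in (d), and it is the reason case (c) must be cut by a plane \emph{through} $P$ rather than reduced to the defective cone case $m=\deg$ (where the residual would carry the very defect computed in part (ii)). Verifying that the auxiliary $\mathbb P^2$ systems — a single fat point together with generic simple points — are non-special, and carefully tracking the boundary between the positive and zero regimes (so that the lower bound from the parameter count meets the Castelnuovo upper bound), is the technical crux of the whole argument.
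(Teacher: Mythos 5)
Your treatment of $m>d$, of $m=d$ (the cone/projection argument, which yields both (ii) and case (b) exactly as in the paper), of $m=1$, and of $m=2$ matches the paper's proof, and those parts are sound. The genuine gap is in case (c), $m=d-1$, and it propagates to the boundary case $s=m+2$ of (d). Cutting with a \emph{generic} plane $H\ni P$, your Castelnuovo step gives
$$\dim (I_X)_d \ \le\ \max\left\{d^2-sd,\,0\right\}+\max\left\{2d+1-s,\,0\right\},$$
and the telescoping identity $(d^2-sd)+(2d+1-s)=(d+1)^2-s(d+1)$ is only valid while both brackets are nonnegative, i.e.\ for $s\le d$. But statement (c) carries no restriction on $s$, and the case everything hinges on is precisely the critical one $s=d+1$: for $s<d+1$ one descends from it by Lemma \ref{BastaProvarePers=e,e*}, and for $s>d+1$ one uses monotonicity. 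At $s=d+1$ your bound reads $\dim(I_X)_d\le 0+d=d$, while the claim is $0$: the trace system (one point of multiplicity $d-1$ plus $d+1$ generic points, in degree $d$, in $\mathbb P^2$) genuinely has dimension $d$, so Castelnuovo is irreparably slack no matter how carefully you verify that residual and trace are non-special. The same failure occurs at the bottom of your induction for (d) when $s=m+2$: the residual there is exactly the critical case-(c) scheme in degree $m+1$. You correctly flag ``keeping Castelnuovo tight'' as the main obstacle, but your proposal contains no mechanism for achieving tightness at the critical $s$, and this is the core difficulty, not a routine check.

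The paper's solution is to take $H$ not generic but equal to the plane $\langle P, L_1\rangle$ spanned by $P$ and one of the lines. Then $Tr_H X = (d-1)P|_H+L_1+R_2+\dots+R_{d+1}$, and splitting off the fixed line $L_1$ gives $\dim (I_{Tr_HX})_d = \binom{d+1}{2}-\binom{d}{2}-d = 0$. Because the trace ideal vanishes, $H$ is a fixed component of every form in $(I_X)_d$, so the inequality becomes the \emph{equality} $\dim (I_X)_d=\dim(I_{Res_HX})_{d-1}$, and $Res_HX=(d-2)P+L_2+\dots+L_{d+1}$ is again the critical case-(c) scheme one degree lower; the induction then closes with no loss. (Two smaller points: the paper derives (d) directly from the critical case of (c) via Lemma \ref{BastaProvarePers=e,e*}(i)--(ii), with no second induction needed; and for (e) your citation of Proposition \ref{Propn=3} plus Lemma \ref{BastaProvarePers=e,e*} covers only $s$ up to the critical number $e$ --- the supercritical range requires Theorem \ref{HH}, and even then the paper leaves the two cases $d=5,\,s=9$ and $d=8,\,s=18$ unproved.)
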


\begin{proof}

(i)  (a) Obvious. We have $ \dim (I_{X})_d =exp \dim (I_{X})_d =0$.

(i) (b) and (ii). If $m=d$   any form of degree $d$ in $I_{ X}$ represents a
cone whose vertex contains $P$.
Hence
$$
\dim (I_X)_d = \dim (I_{X'})_d  , $$
where  $X'\subset \Bbb P^2$ is the projection of $X$ from $P$ in a $\mathbb P^2$ and it is a  scheme
  consisting of $s$ generic  lines. Hence, for $s >d$, we immediately get $\dim (I_X)_d=0$.

  For $s \leq d$ we have
  $$
 \dim (I_{X})_d=
{d-s+2 \choose 2} .$$
Since in this case the expected dimension of $(I_X)_d $ is
$$ exp \dim (I_X)_d = \max \left \{ {d+3 \choose 3} -  {d+2 \choose 3} -s(d+1), 0 \right \}
$$
$$ =\left \{
\begin{matrix}
 {d+2 \choose 2} -s(d+1) & \hbox{ for } & s \leq \frac{d+2}2  \\
 0 & \hbox { for } & s \geq \frac{d+2}2 \\
\end{matrix}
\right.,
$$
then for $s=1$ we have  $\dim (I_{X})_d= exp \dim (I_{X})_d $, and so we are done with
(i)(b).

For $2 \leq s\leq d$ the defect is
$$ \dim (I_{X})_d - exp \dim (I_{X})_d =\left \{
\begin{matrix}
 {s \choose 2}  & \hbox{ for } & s \leq \frac{d+2}2  \\
 \\
 {d-s+2 \choose 2}  & \hbox { for } & s \geq \frac{d+2}2 \\
\end{matrix}
\right. ,
$$
so we  have proved (ii).
\\

(i) (c).  By induction on $d$. Obvious for $d=1$, let $d >1$.

Let
$$X= L_1+ \cdots +L_s+mP
$$
be our scheme, where the $L_i$ are generic lines.
Since $d=m+1$, we have that
$$
exp\dim (I_X)_d = \max \left \{ {d+3 \choose 3} -  {d+1 \choose 3} -s(d+1); 0 \right \}
$$
$$
= \max \left \{ (d+1)^2 -s(d+1); 0 \right \},
$$
hence it is enough to prove that $(I_X)_d$ has the expected dimension    for  $s=d+1$,
{and the conclusion will follows
 from Lemma \ref{BastaProvarePers=e,e*}.
}

Let
$H \simeq \mathbb P^2$ be the plane though $P$ and $L_1$. The trace of $X$ on $H$ is
$$Tr_HX = mP|_H + L_1 + R_2+\cdots +R_{d+1},
$$
where $R_i= L_i \cap H $, and the $R_i$ are $d$   generic points on $H$.

Since  $L_1$ is a fixed component for the curves defined by the forms of $I_{Tr_HX}$, we have
$$\dim (I_{Tr_HX})_d = \dim (I_{Tr_HX -L_1})_{d-1} = {d+1 \choose 2} - {m+1 \choose 2} - d
$$
$$= {d+1 \choose 2} - {d \choose 2} - d=0.$$
It follow that $H$ is a fixed component for the forms of $(I_X)_d$, so
$$\dim (I_X)_d =\dim (I_{Res_HX})_{d-1}
$$
where
$$Res_HX= (m-1)P+L_2+\cdots+L_s = (d-2)P+L_2+\cdots+L_{d+1}.
$$
By the inductive hypothesis we get
$$\dim (I_{Res_HX})_{d-1} = {d+2 \choose 3} - {d \choose 3}- d^2=0,
$$
and we are done with (i) (c).
\\

(i) (d). Since for $m=d-1$, and $s=m+2$ by (i) (c) we have
$$
\dim (I_X)_d = {d+3 \choose 3} -  {m+2 \choose 3} -s(d+1) ,
$$
 by Lemma \ref{BastaProvarePers=e,e*} (i) and (ii) we  get the conclusion.

(i)(e). Let $m= 2$ and $d \geq 3$. We have to prove that

$$
\dim (I_X)_d  = exp \dim (I_X)_d = \max \left \{ {d+3 \choose 3} -  {4} -s(d+1), 0 \right \}.
$$
If $ {d+3 \choose 3} -  {4} -s(d+1)\geq  0$, let
$$e
=\left \lfloor   {{{d+3 \choose 3} -  {4} }\over {d+1} } \right \rfloor
 ; \ \ \
r={d+3 \choose 3} -  4  -  e (d+1),
 $$
and let $P_1, \dots, P_r$ be generic points.

By Proposition \ref{Propn=3} we know that  for $s=e$
$$
\dim (I_{X+P_1+ \cdots +P_r})_d =0,
$$
hence for $s=e$ we have
 $$
\dim (I_{X})_d =r =exp \dim (I_X)_d
$$
and now   the conclusion follows from Lemma \ref{BastaProvarePers=e,e*} (i).

Now let $$ {d+3 \choose 3} -  {4} -s(d+1)< 0 .$$
In this case we have

$$
s >  \frac {{d+3 \choose 3} -  {4} } {(d+1)} =
 \left \{
\begin{matrix}
{ \frac {(h+1)(3h+2)}2}  -  \frac 4 {3h+1}&\   \    \hbox   {for } \ &    d=3h   \\
\  &\                \\
{ \frac {(h+1)(3h+4)}2}  -  \frac 4 {3h+2} &\   \    \hbox   {for } &     d=3h+1     \\
{ \frac {3(h+1)(h+2)}2}  +  \frac {h-3} {3h+3}  &\   \    \hbox   {for } &     d=3h+2     \\
\end{matrix}
\right . 
$$
that is,
$$
s \geq
 \left \{
\begin{matrix}
{ \frac {(h+1)(3h+2)}2}  &\   \    \hbox   {for } \ &    d=3h  ; \\
\  &\                \\
{ \frac {(h+1)(3h+4)}2}  &\   \    \hbox   {for } &     d=3h+1     \\
\\
9   &\   \    \hbox   {for } &     d=5     \\
\\
18   &\   \    \hbox   {for } &     d=8     \\
\\
{ \frac {3(h+1)(h+2)}2}  + 1  &\   \    \hbox   {for } &     d=3h+2   , \ h \geq 3  \\
\end{matrix}
\right . .
$$

Since

$$ t = \left\lceil{d+3 \choose 3} \over {d+1} \right \rceil =
 \left \{
\begin{matrix}
{ \frac {(h+1)(3h+2)}2}  &\   \    \hbox   {for } \ &    d=3h  ; \\
\  &\                \\
{ \frac {(h+1)(3h+4)}2}  &\   \    \hbox   {for } &     d=3h+1     \\
\\
10   &\   \    \hbox   {for } &     d=5     \\
\\
19   &\   \    \hbox   {for } &     d=8     \\
\\
{ \frac {3(h+1)(h+2)}2}  + 1  &\   \    \hbox   {for } &     d=3h+2   , \ h \geq 3  \\
\end{matrix}
\right .
$$
then, except for $d=5$ and $d=8$,
by Theorem \ref{HH} we immediately get  $\dim (I_X)_d =0$.

We remain with the cases $d=5; \ s=9$ and  $d=8; \ s=18$. We omit the proves of these cases.

(i) (f) immediately follows from Theorem \ref{HH}.

\end{proof}

\medskip

\section{Appendix}

\begin{lem}  \label{Propn>3calcoli}  Let
$n \geq4$, $m<d$ and
$$e= \left \lfloor   {{{d+n \choose n} -  {m+n-1 \choose n} }\over {d+1} }\right \rfloor
 ; \ \ \ \
r={d+n \choose n} -  {m+n-1 \choose n}  -  e (d+1) ;
 $$

$$
e'= \left \lfloor   {{{d-1+n \choose n} -  {m+n-1 \choose n} -r}\over {d} }\right \rfloor
;
$$
$$
r'={d-1+n \choose n} -  {m+n-1 \choose n} -r  -  e'd.
$$
Then:
\begin{itemize}

\item[(i)]  $e' \geq 0$;

\item[(ii)]  $e -e'-2r' \geq 0$;

\item[(iii)] $e' \geq r'$.
\end{itemize}
\end{lem}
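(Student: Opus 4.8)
The plan is to turn all three statements into elementary binomial estimates by first expressing $e'$ and $r'$ explicitly in terms of $e$ and $r$. Set $A=\binom{d+n}{n}-\binom{m+n-1}{n}$ and $B=\binom{d-1+n}{n}-\binom{m+n-1}{n}$, so that by definition $A=e(d+1)+r$ with $0\le r\le d$ and $B-r=e'd+r'$ with $0\le r'\le d-1$ (the remainder bounds are immediate, since $e,e'$ are the corresponding floors). Pascal's rule gives $A-B=\binom{d+n}{n}-\binom{d-1+n}{n}=\binom{d+n-1}{n-1}$, whence $B-r=e(d+1)-\binom{d+n-1}{n-1}$, and comparing this with $B-r=e'd+r'$ yields the key identity
\[
(e-e')\,d=\binom{d+n-1}{n-1}+r'-e .
\]
Everything will be read off from this relation together with the remainder bounds.

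For (i), $e'\ge 0$ is equivalent to $B-r\ge 0$. Since $m\le d-1$ we have $\binom{m+n-1}{n}\le\binom{d+n-2}{n}$, so $B\ge\binom{d+n-1}{n}-\binom{d+n-2}{n}=\binom{d+n-2}{n-1}$, and for $n\ge 4$ this is at least $\binom{d+2}{3}\ge d\ge r$; hence $B-r\ge 0$. For (iii), I would note that $e'\ge r'$ holds as soon as $e'\ge d-1$, i.e.\ as soon as $B-r\ge d(d-1)$. Using again $B\ge\binom{d+n-2}{n-1}$ and $r\le d$, this reduces to $\binom{d+n-2}{n-1}\ge d^2$, which at $n=4$ is $(d-1)(d-2)\ge 0$ and, the left side being nondecreasing in $n$, holds for every $n\ge4$, $d\ge 1$. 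Thus (i) and (iii) are quick.

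The substance is in (ii). Rearranging the key identity, $e-e'-2r'\ge 0$ is equivalent to $\binom{d+n-1}{n-1}-e\ge(2d-1)\,r'$. I would bound $e\le A/(d+1)$ and $r'\le d-1$ and drop the nonnegative term $\binom{m+n-1}{n}$, reducing (ii) to the $m$-free inequality
\[
g(d,n):=(d+1)\binom{d+n-1}{n-1}-\binom{d+n}{n}\ \ge\ (d+1)(2d-1)(d-1).
\]
By the hockey-stick identity $g(d,n)=\sum_{j=0}^{d-1}\bigl[\binom{d+n-1}{n-1}-\binom{j+n-1}{n-1}\bigr]$, and a one-line application of Pascal's rule shows each summand, hence $g(d,n)$, is nondecreasing in $n$. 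So it suffices to check the inequality at the smallest admissible $n$: for $n=4$ the factored form gives $g(d,4)=\tfrac18 d(d+1)(d+2)(d+3)$, and the inequality becomes $(d-4)(d^2-7d+2)\ge 0$, which holds for all $d\ge 1$ except $d=5,6$; for those two values I would instead invoke $n=5$ (a direct check) together with monotonicity in $n$. This leaves exactly the two pairs $(n,d)=(4,5)$ and $(4,6)$ uncovered.

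Those two cases are the main obstacle, because the inequality in (ii) is genuinely sharp: there the crude substitution $r'\le d-1$ is tight and in fact $e-e'-2r'=0$ for some $m$. I would therefore handle $(4,5)$ and $(4,6)$ by hand, computing the exact residues $r$ and $r'$ for each of the finitely many multiplicities $m$ with $0\le m\le d-1$ and checking $e-e'-2r'\ge 0$ directly. The only delicate point is the bookkeeping: one must use the exact value of $r'$ rather than the bound $r'\le d-1$, since the margin in these two cases is zero.
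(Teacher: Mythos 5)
Your proof is correct, and its overall strategy matches the paper's: reduce (ii) to an $m$-free binomial inequality, use monotonicity in $n$, and dispose of the remaining cases by finite computation; your (i) and (iii) use essentially the paper's own estimates ($\binom{m+n-1}{n}\le\binom{d+n-2}{n}$, $r\le d$, $r'\le d-1$, and at $n=4$ the inequality $\binom{d+2}{3}\ge d^2$, which is the paper's identity $\binom{d+2}{3}-d^2=\binom{d}{3}$ in disguise). The genuine difference is the pivot for (ii). Writing $A=\binom{d+n}{n}-\binom{m+n-1}{n}$ and $B=\binom{d-1+n}{n}-\binom{m+n-1}{n}$ as you do, the paper substitutes the definition of $r'$ into the integrality criterion $A-(d+1)(e'+2r')\ge 0$ and then bounds $e'$ from below by $\frac{B-r}{d}-1$, discarding both $\binom{m+n-1}{n}$ and $r(d+1)$; the resulting $m$-free inequality $(n-1)\binom{d+n-1}{d-1}\ge d(2d^2+d-1)$ fails at $n=4$ for all $2\le d\le 9$, which forces the paper into its table of roughly three dozen $(d,m)$ cases. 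Your pivot is instead the exact identity $(e-e')d=\binom{d+n-1}{n-1}+r'-e$ (Pascal applied to $A-B$), which turns (ii) into the slack-free equivalence $\binom{d+n-1}{n-1}-e\ge(2d-1)r'$; the only losses are then $e\le\binom{d+n}{n}/(d+1)$ and $r'\le d-1$, and the resulting inequality $(d+1)\binom{d+n-1}{n-1}-\binom{d+n}{n}\ge(d+1)(2d-1)(d-1)$ fails at $n=4$ only for $d=5,6$. So your route buys a much smaller exceptional set, at the cost of nothing conceptual. I have checked the computations you defer, and they do succeed: $g(5,5)=6\binom{9}{4}-\binom{10}{5}=504\ge 216$ and $g(6,5)=7\binom{10}{4}-\binom{11}{5}=1008\ge 385$, so monotonicity settles all $n\ge 5$; and for $(n,d)=(4,5)$ and $(4,6)$ the exact residues give $e-e'-2r'=0,0,4,1$ (for $m=1,\dots,4$) and $0,0,4,8,7$ (for $m=1,\dots,5$) respectively, in agreement with the paper's table. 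In particular your remark that these cases are genuinely sharp is right: there $r'=d-1$ and $e-e'-2r'=0$ at $m=1,2$, so no $m$-free estimate could have avoided an exact check, which vindicates both your nine hand computations and the paper's larger table.
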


\begin{proof}
(i)  Since $n\geq 4$ and $r \leq d$, we have
$$e'={d-1+n \choose n} -  {m+n-1 \choose n} -r \geq
{d-1+n \choose n} -  {d-1+n-1 \choose n} -d $$
$$={d+n-2 \choose {d-1}} -d \geq
{d+2 \choose {d-1}} -d \geq 0.$$

(ii)  Since $e'+2r' $ is an integer, then  the inequality
$e \geq  e'+2r'  $ is equivalent to
 ${{{d+n \choose n} -  {m+n-1 \choose n} }\over {d+1} }  \geq e'+2r'  $.
Hence, if we prove that
 $${{d+n \choose n} -  {m+n-1 \choose n}  } -(d+1)e'-2(d+1)r'  \geq 0
 $$
 we are done.

 Now
 $${{d+n \choose n} -  {m+n-1 \choose n}  } -(d+1)e'-2(d+1)r'
$$
$$={{d+n \choose n} +(2d+1) {m+n-1 \choose n}  } +
$$
$$+(d+1)(2d-1)e'-2{d-1+n \choose n} (d+1) +2r (d+1)
$$
$$\geq {{d+n \choose n} +(2d+1) {m+n-1 \choose n}  } +
$$
$$+(d+1)(2d-1)
\left (  {{{d-1+n \choose n} -  {m+n-1 \choose n} -r}\over {d} } -1 \right)
-2{d-1+n \choose n} (d+1) +2r (d+1)
$$
$$= {1 \over d} \left (
d{d+n \choose n} -  (d+1) {d-1+n \choose n}+   {m+n-1 \choose n} +r (d+1) -d(2d^2+d-1)
\right)
$$
$$= {1 \over d} \left (
(n-1){d+n-1 \choose {d-1}} +   {m+n-1 \choose n} +r (d+1) -d(2d^2+d-1)
\right)
$$
$$\geq {1 \over d} \left (
(n-1){d+n-1 \choose {d-1}} -d(2d^2+d-1)
\right).
$$
For $n\geq 5$ we have
$$(n-1){d+n-1 \choose {d-1}} -d(2d^2+d-1) \geq
{1\over {30}} d(d+1)((d+2)(d+3)(d+4)-60d+30) \geq 0
$$
for any $d \geq 1$.

For $n = 4$, we have
$$
(n-1){d+n-1 \choose {d-1}} -d(2d^2+d-1) = {1\over 8} d (d+1) (d^2-11d+14) ,
$$
and this is positive for $d=1$ and $d \geq 10$.
Hence, except  for $n=4$ and $ 2 \leq d \leq 9$, we have proved that $e-e'-2r' \geq 0$.
For $n=4$ by direct computation we find:

$\begin{matrix}
d      &   \ m  &  \  e & \  e'     &  \  r'  &  \    e-e'-2r'  \\
2      &  \  1  &  \  4 & \   1   &     \  0  &  \    3  \\
3      & \   1  &  \  8 & \   4   &    \  0  &  \    4  \\
3      & \   2  &  \  7 & \   2   &    \  2  &  \    1  \\
4      & \   1  &  \  13 & \   7   &    \  2  &  \    2  \\
4      & \   2  &  \ 13 & \   7   &    \  2  &  \    2  \\
4      & \   3  &  \  11 & \   5   &    \  0  &  \    6  \\
5      & \   1  &  \  20 & \   12  &    \  4  &  \    0  \\
5      & \   2  &  \  20 & \   12  &    \  4  &  \    0  \\
5      & \   3  &  \  18 & \   10   &    \  2  &  \    4  \\
5      & \   4  &  \  15 & \   6   &    \  4  &  \    1  \\
6      & \   1  &  \  29 & \   19  &    \  5  &  \    0  \\
6      & \   2  &  \  29 & \   19  &    \  5  &  \    0  \\
6      & \   3  &  \  27 & \   17  &    \  3  &  \    4  \\
6      & \   4  &  \  25 & \   15  &    \  1  &  \    8  \\
6      & \   5  &  \  20 & \   9  &    \  2  &  \    7  \\
7      & \   1  &  \  41 & \   29  &    \  5  &  \    2  \\
7      & \   2  &  \  40 & \   28  &    \  4  &  \    4  \\
7      & \   3  &  \  39 & \   27  &    \  3  &  \    6  \\
7      & \   4  &  \  36 & \   24  &    \  0  &  \    12  \\
7      & \   5  &  \  32 & \   19  &    \  3  &  \    7  \\
7      & \   6  &  \  25 & \   11  &    \  3  &  \    8  \\
8      & \   1  &  \  54 & \   40  &    \  1  &  \    12  \\
8      & \   2  &   \  54 & \   40  &    \  1  &  \    12  \\
8      & \   3  &  \  53 & \   39  &    \  0  &  \    14  \\
8      & \   4  &  \  51 & \   36  &    \  6  &  \    3  \\
8      & \   5  &  \  47 & \   32  &    \  2  &  \    11  \\
8      & \   6  &  \  41 & \   25  &    \  4  &  \    8  \\
8      & \   7  &  \  31 & \   14  &    \  2  &  \    13  \\
9      & \   1  &  \   71 & \   54  &    \  4  &  \    9  \\
9      & \   2  &  \  71 & \   54  &    \  4  &  \    9  \\
9      & \   3  &  \  70 & \   53  &    \  3  &  \    11  \\
9      & \   4  &  \  68 & \   51  &    \  1  &  \    15  \\
9      & \   5  &  \  64 & \   46  &    \  6  &  \    6  \\
9      & \   6  &  \  58 & \   40  &    \  0  &  \    18  \\
9      & \   7  &  \  50 & \   31  &    \  1  &  \    17  \\
9      & \   8  &  \  38 & \   17  &    \  7  &  \    7  \\

\end{matrix}
$

It follows that also in these cases we have $e-e'-2r' \geq 0$, and this completes the proof.

(iii)  $r'$ is an integer, hence it sufficies to prove that

$$  {{{d-1+n \choose n} -  {m+n-1 \choose n} -r}\over {d} } - r' \geq 0.
$$
Since $m \leq d-1$, $r \leq d$ and $r '\leq d-1$, $n \geq 4$ we have
$$
 {{{d-1+n \choose n} -  {m+n-1 \choose n} -r} } - r' d
 $$
 $$
 \geq {d-1+n \choose n} -  {d-1+n-1 \choose n} -d- (d-1)d
$$
$$
= {d+n-2 \choose {d-1}}  -d^2 \geq  {d+2 \choose {3}}  -d^2 = {d \choose {3}}\geq 0,
$$
and the conclusion follows.

\end{proof}


\def\cprime{$'$}

\end{document}